\newcommand{\arxiv}[2][]{\ifthenelse{\equal{#1}{}}
{\href{http://arxiv.org/abs/#2}{\tt arXiv:#2}}
{\href{http://arxiv.org/abs/math/#2}{\tt arXiv:math.#1/#2}}}
\theoremstyle{plain}
\newtheorem{theorem}{Theorem}[section]
\newtheorem{lemma}[theorem]{Lemma}
\newtheorem{corollary}[theorem]{Corollary}
\newtheorem{proposition}[theorem]{Proposition}
\newtheorem{problem}[theorem]{Problem}
\newtheorem{maintheorem}{Theorem}
\newtheorem*{addendum*}{Addendum}
\theoremstyle{definition}
\newtheorem{example}[theorem]{Example}
\newtheoremstyle{remark}
{}{}{}{}{\itshape}{}{ }{\thmname{#1}\thmnumber{ \itshape #2.}}
\theoremstyle{remark}
\newtheorem{remark}[theorem]{Remark}
\DeclareMathOperator{\id}{id}
\DeclareMathOperator{\Hom}{Hom}
\def\x{\times}
\def\but{\setminus}
\def\eps{\varepsilon}
\def\phi{\varphi}
\def\emptyset{\varnothing}
\renewcommand{\:}{\colon}
\def\Cl#1{\overline{#1}}
\def\xr#1{\xrightarrow{#1}}
\def\R{\mathbb{R}}
\def\Z{\mathbb{Z}}
\def\C{\mathbb{C}}
\begin{document}

\title{Lifting generic maps to embeddings. Triangulation and smoothing}
\author{Sergey A. Melikhov}
\address{Steklov Mathematical Institute of Russian Academy of Sciences,
ul.\ Gubkina 8, Moscow, 119991 Russia}
\email{melikhov@mi-ras.ru}

\begin{abstract}
We show that if a non-degenerate PL map $f\:N\to M$ lifts to a topological embedding in $M\x\R^k$ then it lifts to 
a PL embedding in there.
We also show that if a stable smooth map $N^n\to M^m$, $m\ge n$, lifts to a topological embedding in $M\x\R$, then 
it lifts to a smooth embedding in there.
\end{abstract}

\maketitle

\section{Introduction} \label{sec1}

Let $f\:N\to M$ be a continuous, piecewise linear or smooth map. (By ``smooth'' we will always mean $C^\infty$.)
We say that $f$ is a topological/PL/smooth {\it $k$-prem} ($k$-codimensionally
{\bf pr}ojected {\bf em}bedding) if there exists a map $g\:N\to\R^k$
such that $f\x g\:N\to M\x\R^k$ is a topological/PL/smooth embedding.%
\footnote{That is, a continuous/PL/smooth map which is a homeomorpism/PL homeomorphism/diffeomorphism 
onto its image.}
When the choice of a category is irrelevant, we will speak simply of ``$k$-prems''.
The abbreviation ``prem'' was coined by A. Sz\H ucs in the 90s (see \cite{Akh}, \cite{Sz}), while the notion 
itself is older \cite{BH}, \cite{DH}, \cite{H1}, \cite{H2}, \cite{KW},  \cite{Pe}, \cite{Po}, \cite{Re}, 
\cite{Si}.
Other related work includes \cite{ARS}, \cite{FP}, \cite{Ku}, \cite{Ni}, \cite{RS}, \cite{ST}, \cite{Sa}, 
\cite{Sk}, \cite{TV}, \cite{Ya}.
Some aspects of the theory of $k$-prems are surveyed in the introductions of the recent papers 
\cite{AM}, \cite{M3}.

The main goal of the present paper is to study the difference between topological, PL and smooth $k$-prems.
This paper is a shorter companion of the longer paper \cite{partII}, which addresses the question
of when a given map is actually a $k$-prem.

\begin{maintheorem}\label{th2}
A non-degenerate%
\footnote{A PL map is called {\it non-degenerate} if it has no point-inverses of dimension $>0$.}
PL map between compact polyhedra is a topological $k$-prem if and only if it is a PL $k$-prem.
\end{maintheorem}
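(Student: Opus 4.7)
The ``if'' direction is trivial since every PL embedding is a topological embedding. For ``only if'', let $g \: N \to \R^k$ be continuous with $f \x g$ a topological embedding; my aim is to produce a PL map $g' \: N \to \R^k$ such that $f \x g'$ is a PL embedding. Since $N$ is compact, this amounts to $g'$ separating the (finite, by non-degeneracy of $f$) fibers of $f$. I will reformulate via the compact PL fiber product $F = N \x_M N$, with diagonal subpolyhedron $\Delta_N \incl F$ and continuous map $\bar g \: F \to \R^k$ given by $\bar g(x, y) = g(x) - g(y)$: the hypothesis reads $\bar g^{-1}(0) = \Delta_N$, and the goal reduces to producing a PL $g'$ with $(\bar g')^{-1}(0) = \Delta_N$.

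The plan is a two-region approximation. Pick triangulations $K$ of $N$ and $L$ of $M$ with $f \: K \to L$ simplicial, and fix a regular neighborhood $U$ of $\Delta_N$ in $F$ in the induced triangulation of $F$. Because $F \but \Int U$ is compact and disjoint from $\Delta_N$, the continuous $|\bar g|$ admits a positive lower bound $\eps > 0$ there; then any PL $g'$ obtained by simplicial approximation on a sufficiently fine subdivision of $K$, with $\sup_N |g' - g| < \eps / 2$, automatically has $\bar g' \ne 0$ on $F \but \Int U$. The substantive task is to arrange $\bar g' \ne 0$ on $U \but \Delta_N$ as well; there $\bar g$ itself tends to $0$, so uniform closeness alone gives no control.

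To treat $U \but \Delta_N$ I will exploit the local cone structure of $f$: in $\st(v, K)$ the map $f$ is the cone on the simplicial map $\lk(v, K) \to \lk(f(v), L)$, so the germ of $F$ at $(v, v)$ is the join of $(v, v)$ with a fiber product of links, and the requirement that $\bar g'(x, y) \ne 0$ away from the cone point on this germ translates into a finite system of linear-algebraic constraints on the values of $g'$ at the vertices neighboring $v$. I therefore plan to build $g'$ vertex by vertex: at each vertex $v$, the forbidden values for $g'(v)$ form a finite union of proper affine subspaces inside the $\eps / 2$-ball around $g(v)$, and a generic choice in that ball avoids them. The main obstacle, which I expect to be the most delicate step, is the compatibility of these local choices across all vertices simultaneously; I plan to handle it by taking $K$ fine enough that interacting branch loci lie in stars of disjoint vertices and by ordering the vertex choices inductively by increasing ``branch depth'' of $f$, so that each subsequent vertex still has enough freedom to satisfy the accumulated constraints.
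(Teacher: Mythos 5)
Your high-level strategy is reasonable: reformulate via the fiber product $F=N\x_M N$, split $F$ into a regular neighborhood $U$ of $\Delta_N$ and its complement, handle the complement by uniform approximation. That far-from-diagonal step is correct. But the near-diagonal step — which you rightly identify as ``the substantive task'' — contains a genuine gap.

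Your key claim is that the requirement $\bar g'\ne 0$ on the punctured germ at $(v,v)$ ``translates into a finite system of linear-algebraic constraints,'' so that the forbidden values form ``a finite union of proper affine subspaces'' avoidable by a generic choice. This is not right. What the condition actually says is that the PL map $\bar g'$, restricted to the positive-dimensional link $\beta$ of $(v,v)$ inside $\Delta_f$, must have $0\notin\bar g'(\beta)$. For $k=1$ this is a \emph{sign-consistency} condition — $\bar g'$ must have constant sign over all of $\beta$ — and that is an open condition but emphatically not a generic one. For $k\ge 2$ the condition $0\notin\bar g'(\beta)$ is likewise open but not of positive codimension once $\dim\beta\ge k$, so again not a genericity matter. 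Moreover, the constraint involves the values of $g'$ on the whole link of $v$, not on $g'(v)$ alone, so ``vertex-by-vertex'' choices at $v$ do not address it directly. The paper's own Example after Lemma \ref{triangulation} (the map $f(x)=|x|$ with an oscillating lift $g$) illustrates the failure: any ``generic'' PL approximation of $g$, linear on small intervals, will have $g'(x)-g'(-x)$ changing sign near $0$, destroying injectivity of $f\x g'$. Since $|\bar g|$ is arbitrarily small near $\Delta_N$, the uniform bound $\sup|g'-g|<\eps/2$ does not fix the sign there. And when the constraints at neighboring branch vertices share link vertices, one must verify they can be met simultaneously; ``ordering by branch depth'' is too vague to serve as a proof of that.

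This is precisely the issue the paper's Lemma \ref{triangulation} is built to solve: it produces subdivisions $K$, $L$ and derived subdivisions $K'$, $L'$ such that for distinct vertices $u,v$ of $K$ with $f(u)=f(v)$, the convex hulls of $g(|u^*_{K'}|)$ and $g(|v^*_{K'}|)$ are \emph{disjoint}. This is established by a multi-scale iterative subdivision (the mesh near the $i$-skeleton is driven by the oscillation of $g$ near the $i$-skeleton, which shrinks with $i$); it is not a ``fine enough plus generic'' argument. The Example shows that a naive (e.g.\ barycentric) choice of $K'$ fails, which is the same obstruction your proposal does not confront. With the lemma in hand, the paper then takes $g'$ to be $g$ straightened conically on dual cones (not a simplicial approximation chosen vertex by vertex), and the convex-hull disjointness guarantees injectivity. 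To repair your proposal you would essentially need to prove a statement of the strength of Lemma \ref{triangulation}, at which point the two arguments converge.
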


Theorem \ref{th2} is contained in Theorem \ref{4.1}.
As a byproduct of its proof we also obtain

\begin{theorem}
Let $f\:N\to M$ be a non-degenerate PL map between compact polyhedra. 
The space of topological embeddings $N\to M\x\R^k$ that lift $f$ is locally contractible.
\end{theorem}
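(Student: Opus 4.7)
The plan is to bootstrap from Theorem~\ref{th2}: I would first prove local contractibility of the subspace $\mathcal{E}^{\mathrm{PL}}$ of PL embeddings $N \to M \x \R^k$ lifting $f$, and then transfer this to the full space $\mathcal{E}^{\mathrm{top}}$ of topological embeddings lifting $f$ via a family version of the proof of Theorem~\ref{th2}.

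For local contractibility of $\mathcal{E}^{\mathrm{PL}}$ at a basepoint $\tilde e_0 = f \x \tilde g_0$, I would fix a triangulation $K$ of $N$ on which both $f$ and $\tilde g_0$ are simplicial. Any close PL lift $\tilde e = f \x \tilde g$, after a common subdivision, is simplicial on the same $K$, so linear interpolation on vertex values $\tilde g_t = (1-t)\tilde g_0 + t\tilde g$ defines a PL path. The obstruction to $f \x \tilde g_t$ being an embedding lives on the double-point set $D = \{(x, y) \in N \x N \but \Delta_N : f(x) = f(y)\}$, which fails to be compact: $\overline D$ accumulates on diagonal points where several simplices of $K$ meet, so $|\tilde g_0(x) - \tilde g_0(y)|$ has no positive lower bound on $D$ and a merely $C^0$-small perturbation need not suffice. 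However, simpliciality and non-degeneracy of $f$ stratify $\overline D$ into finitely many pieces indexed by pairs $\sigma, \tau \in K$ with $f(\sigma) \cap f(\tau) \ne \emptyset$; on each piece, avoiding the locus $\tilde g_t(x) = \tilde g_t(y)$ is a PL general position condition on finitely many vertex values, and can be enforced whenever the simplexwise slopes of $\tilde g - \tilde g_0$ (not merely the $C^0$-norm) are small.

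For the transfer, I would extract from the proof of Theorem~\ref{th2} a PL approximation operator $A \colon U \to \mathcal{E}^{\mathrm{PL}}$ defined on a small neighborhood $U$ of the basepoint $e_0 \in \mathcal{E}^{\mathrm{top}}$, together with a canonical isotopy $(t, e) \mapsto A_t(e) \in \mathcal{E}^{\mathrm{top}}$, $t \in [0,1]$, connecting $A_0(e) = e$ to $A_1(e) = A(e)$. Local contractibility at $e_0$ then follows by concatenating, on a sufficiently small $V \subset U$, the homotopy $(t, e) \mapsto A_t(e)$ with the Step~1 contraction of $A(V)$ to $A(e_0)$ performed inside $\mathcal{E}^{\mathrm{PL}}$.

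The principal obstacle is producing the operator $A$ and its accompanying isotopy with continuous dependence on $e$. The proof of Theorem~\ref{th2} produces a PL lift through non-canonical choices---adapted triangulations of $N$ and $M$, PL general-position perturbations on skeleta---which a priori need not be compatible between nearby topological lifts. My approach would be to fix the triangulations and the general-position data that work for $e_0$, verify by openness of the relevant conditions that they continue to work for all $e$ in some neighborhood, and extract the perturbation as a continuous selection on that neighborhood; the accompanying isotopy is then furnished by an appropriate straight-line homotopy in $\R^k$, with the general position analysis of Step~1 applied with $g_0$ (merely topological) in place of a PL reference.
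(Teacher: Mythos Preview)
Your Step~2 is the paper's approach, and your final paragraph correctly anticipates the key move: fix the combinatorial data that works for the basepoint $e_0$, verify by openness that it works on a neighborhood, and obtain a continuous PL approximation operator together with a canonical isotopy. The paper does exactly this, with the approximation $A(e)$ being the simplicial map on a fixed derived subdivision $K'$ agreeing with $g$ on vertices, and the isotopy being an explicit Alexander-trick coning on dual cones.

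Your Step~1, however, has two gaps. The claim that any $C^0$-close PL lift is ``after a common subdivision, simplicial on the same $K$'' fails in the sense you need: a PL map $C^0$-close to $\tilde g_0$ need not be linear on the simplices of any triangulation fixed in advance, so there is no single $K$ serving an entire neighborhood in $\mathcal E^{\mathrm{PL}}$. Your fallback of controlling ``simplexwise slopes'' is also unavailable, since the topology is $C^0$ and a $C^0$-neighborhood contains PL maps with arbitrarily large slopes. The paper sidesteps both problems by arranging that the operator $A$ of Step~2 already lands in maps simplicial on the \emph{fixed} $K'$; the interpolation $A(e_x)\rightsquigarrow A(e_0)$ is then between maps linear on the same simplices, and a short stability argument shows it stays an embedding. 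So Step~1 is not needed as a separate result; it is absorbed into the range of $A$.

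The substantive technical point you have flagged but not solved is the non-compactness of the double-point set $D$: since $|g(x)-g(y)|$ has no positive infimum on $D$, mere $C^0$-smallness of the perturbation does not obviously keep the lift embedded. Your ``slopes'' proposal does not address this. The paper's solution is Lemma~\ref{triangulation}: an iterative subdivision, refined more aggressively away from each successive skeleton and calibrated against the modulus of continuity of $g$, which achieves that for distinct vertices $u,v$ of $K$ with $f(u)=f(v)$ the \emph{convex hulls} of $g(|u^*_{K'}|)$ and $g(|v^*_{K'}|)$ are disjoint, indeed with a definite margin. Every time-slice of the Alexander-trick homotopy sends $|u^*_{K'}|$ into that convex hull, so injectivity of $f\times h_t$ follows; and the margin makes the same $K'$ work for all $g_x$ in a $C^0$-neighborhood. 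This lemma is the missing ingredient in your outline.
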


There is also a parallel result for PL embeddings that lift $f$ (see Corollary \ref{loc-contr}).

A smooth map $f\:N\to M$ is called {\it stable} if it has a neighborhood $U$ in $C^\infty(N,M)$ 
such that for every $g\in U$ there exist diffeomorphisms $\phi\:N\to N$ and $\psi\:M\to M$ such that
$\psi f=g\phi$.
The theory of stable smooth maps is exposed in detail in a number of textbooks, including 
\cite{GG} and \cite{AGV}.
We sometimes indicate dimensions of manifolds and polyhedra by the superscript.

\begin{problem} \label{prob} If a stable smooth map $f\:N^n\to M^m$, $m\ge n$, where $N$ is compact,
is a topological $k$-prem, is it a smooth $k$-prem? 
\end{problem}

Without the hypothesis $m\ge n$ the answer would be negative.
(If $N$ embeds in $\R^k$ topologically but not smoothly, then the map $N\to\R^0$ is a topological $k$-prem 
but not a smooth $k$-prem.)

On the other hand, for $k=0$ the answer is affirmative, even if ``stable'' (=$C^\infty$-stable) is weakened to
$C^0$-stable.%
\footnote{Indeed, suppose that $f$ is a $C^0$-stable smooth map which is a topological embedding but not a smooth embedding.
Then $f$ has non-injective differential at some point.
Hence it is $C^\infty$-approximable by smooth maps that are not injective.
So, being $C^0$-stable, it cannot be injective itself, which is a contradiction.}
When $f$ is a smooth immersion, the answer is also affirmative.%
\footnote{Indeed, if $g\:N\to\R^k$ is a map such that $f\x g\:N\to M\x\R^k$ is injective, then for every smooth map 
$g'\:N\to\R^k$ sufficiently $C^0$-close to $g$ the map $f\x g'$ is clearly injective, hence a smooth embedding.}

Theorem \ref{II:th1}(c) of the companion paper \cite{partII} implies an affirmative answer to Problem \ref{prob} in 
the case where $2(m+k)\ge 3(n+1)$ and $3n-2m\le k$.

\begin{maintheorem} \label{th2'} A stable smooth map $f\:N^n\to M^m$, $m\ge n$, is a topological $1$-prem 
if and only if it is a smooth $1$-prem. 
\end{maintheorem}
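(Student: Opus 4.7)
The direction from smooth to topological is immediate. For the converse, suppose $g_0\colon N\to\R$ is continuous with $f\times g_0$ a topological embedding; the goal is to produce a smooth $g\colon N\to\R$ such that $f\times g$ is a smooth embedding. Following the paper's subtitle, I would first \emph{triangulate} in order to apply Theorem~\ref{th2}, and then \emph{smooth} the resulting PL lift. Because $f$ is stable and $m\ge n$, every fiber of $f$ is $0$-dimensional (this follows from the local normal forms for stable germs in this range), hence finite by compactness of $N$. A triangulation theorem for Whitney-stratified smooth maps (Verona--Hironaka) applied to the Thom--Boardman stratification of $f$ yields compatible triangulations of $N$ and $M$ in which $f$ becomes a simplicial, non-degenerate PL map $f_\Delta$; the continuous $g_0$ exhibits $f_\Delta$ as a topological $1$-prem, so Theorem~\ref{th2} supplies a PL map $g_1\colon N\to\R$ with $f\times g_1$ a PL embedding.

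To upgrade $g_1$ to a smooth $g$, I would take a smooth $C^0$-approximation. Injectivity of $f\times g_1$ (an embedding of compact $N$) is preserved under sufficiently small $C^0$ perturbations, so the real issue is ensuring that $f\times g$ is a smooth immersion. At each $x\in N$ this amounts to $dg_x$ being injective on $\ker df_x$; since $dg_x$ maps into the one-dimensional space $\R$, the condition forces $\dim\ker df_x\le 1$ everywhere, i.e.\ $f$ has only corank-$\le 1$ singularities. Granted this bound, standard jet-transversality along the singular stratum (on which $\ker df$ is a line bundle) produces a smooth $g$ that is arbitrarily $C^0$-close to $g_1$ and satisfies the immersion condition, completing the proof.

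The main obstacle is to show that the topological $1$-prem hypothesis on $f$ already forces the corank bound $\dim\ker df\le 1$. Stable maps in equidimensional ranges do carry corank-$\ge 2$ singularities once $n=m\ge 4$ (by the Thom--Boardman dimension count), so genuine input is needed. At such a singularity $x_0$, Mather's local normal form describes $f$ near $x_0$ as a finite branched cover of its image whose deck involution acts nontrivially (e.g.\ antipodally) on a transverse $2$-disk; a continuous lift separating the sheets would restrict on the boundary circle to a continuous nowhere-vanishing odd function, contradicting the intermediate value theorem. Analogous Borsuk--Ulam-type arguments should rule out the remaining corank-$\ge 2$ stable germs.
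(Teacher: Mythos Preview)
Your plan to triangulate via Verona and your Borsuk--Ulam heuristic for the corank bound are both in line with the paper (the latter is essentially Lemma~\ref{morin projection}), but two steps do not go through as written. The assertion that injectivity of $f\times g_1$ is preserved under sufficiently small $C^0$ perturbations is false: the set $\Delta_f=\{(x,y):x\ne y,\ f(x)=f(y)\}$ is not closed in $N\times N$ --- it accumulates on the diagonal over $\Sigma_f$ --- so $\inf_{(x,y)\in\Delta_f}|g_1(x)-g_1(y)|=0$, and an arbitrarily $C^0$-small perturbation of $g_1$ can create a collision (already for the fold $f(x)=x^2$ on $[-1,1]$ one sees this explicitly). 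The paper does not argue by $C^0$-stability; instead it controls the \emph{sign map} $\tilde g\colon\Delta_f\to S^0$. Lemma~\ref{homotopy} shows that two embedded lifts with the same sign map are isotopic via the straight-line homotopy, so the smooth lift is constructed so that its sign map agrees with that of the given topological lift: away from the diagonal this is arranged by the convex-hull separation of Lemma~\ref{triangulation}, while near the diagonal the sign of $\tilde g$ is dictated by the sign of $dg|_{\ker df}$ (via Lemma~\ref{smoothcurve}), which is prescribed using the local Morin models $\Phi_r^\pm$.

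Secondly, the corank bound alone is insufficient: one also needs $\ker df$ to be \emph{trivial} as a line bundle over $\Sigma_f$ (the other conclusion of Lemma~\ref{morin projection}, obtained by the same Borsuk--Ulam argument applied to a loop in $\Sigma_f$). If $\ker df$ were nontrivial there would be no smooth $g$ with $dg|_{\ker df}$ nowhere zero, and ``standard jet-transversality'' cannot manufacture one --- transversality makes the vanishing locus of $dg|_{\ker df}$ a generic hypersurface in $\Sigma_f$, not empty. The paper uses the trivialization to choose the sign $\delta$ in each local model $\Phi_r^\delta$ consistently, which is exactly what makes the inductive patching over dual cones succeed.
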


Theorem \ref{th2'} is contained in Theorem \ref{smoothing}.
Some variation of its proof also yields

\begin{theorem} \label{prim} Let $f\:N^n\to M^m$, $m\ge n$, be a stable smooth map.
If $k\ge 2$, assume additionally that $f$ is a corank one map.%
\footnote{That is, $\dim\ker df_x\le 1$ for each $x\in N$.}
Then $f$ lifts to a topological immersion%
\footnote{That is, a map which embeds some neighborhood of each point of the domain.}
$N\to M\x\R^k$ if and only if it lifts to a smooth immersion $N\to M\x\R^k$.
\end{theorem}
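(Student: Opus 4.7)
The implication smooth $\Rightarrow$ topological is immediate. For the converse, suppose $g\:N\to\R^k$ is continuous and $f\x g$ is a topological immersion; write $\Sigma\subset N$ for the singular set of $f$. Off $\Sigma$ any smooth lift $g'$ makes $f\x g'$ a smooth immersion automatically, so the whole problem is to arrange that $dg'_x$ be nonzero on $\ker df_x$ for every $x\in\Sigma$. Under the corank-one hypothesis---which, for $k=1$, is forced at $\Sigma$ by the very existence of a topological immersion, since a linear map to $\R$ cannot be injective on a $2$-plane---the subspaces $\ker df_x$ form a smooth line subbundle $L\subset TN|_\Sigma$, and the smooth immersion condition becomes: $dg'|_L$ is a nowhere-zero section of the rank-$k$ bundle $L^*\otimes\underline{\R^k}$ over $\Sigma$.

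My plan is, first, to extract from $g$ a continuous nowhere-zero section $s$ of $L^*\otimes\underline{\R^k}$ over $\Sigma$; second, to smooth $s$ to a nowhere-zero $s'$ by a standard approximation (nonvanishing being an open condition); and third, to realize $s'$ as $dg'|_L$ for a smooth $g'\:N\to\R^k$ via a first-order Taylor-type formula in a tubular neighborhood of $\Sigma$ whose normal direction along $\Sigma$ contains $L$, matching a smooth approximation of $g|_\Sigma$ on the zero section, and then extending smoothly to $N$. By construction $dg'|_L=s'$ is nowhere zero on $\Sigma$, so $f\x g'$ is a smooth immersion.

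The extraction of $s$ is carried out stratum by stratum along the Thom--Boardman decomposition of $\Sigma$, starting from the open fold stratum. At a fold point, Thom--Mather normal form gives coordinates $(u,v)$ with $f(u,v)=(u^2,v)$ and $L=\langle\partial_u\rangle$, and the topological immersion condition amounts to $g(u,v)\ne g(-u,v)$ for $u\ne 0$. For $k=1$ the sign of this difference is well-defined and continuous in $v$, giving $s$ immediately. For $k\ge 2$, $g(u,v)-g(-u,v)$ may spiral around $\R^k\setminus 0$ as $u\to 0^+$ and fail to have a pointwise limiting direction, so I would first replace $g$ by a $C^0$-close continuous lift (still a topological immersion) whose directional limits along $\Sigma$ do exist, and only then read off $s$. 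Deeper Morin strata ($A_2$, $A_3$, $\ldots$) are handled by induction on depth, using the analogous higher-order separation of the $r$-tuples of preimages at an $A_{r-1}$ singularity to extend $s$ continuously across the stratification.

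The main obstacle is precisely this regularization step for $k\ge 2$: perturbing $g$ to a lift with well-behaved directional limits along $\Sigma$ without destroying the topological immersion property, and matching the perturbations consistently as one passes from shallower to deeper Morin strata. This is where a variation of the argument of Theorem~\ref{smoothing} enters; the simplification compared with that theorem is that the smooth immersion condition depends only on the $1$-jet of $g'$ along $\Sigma$, so no global injectivity constraint on $g'$ need be maintained during the construction, which is why the argument goes through for all $k\ge 1$ rather than only $k=1$.
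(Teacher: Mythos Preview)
Your plan---reduce to finding a nowhere-zero section of $L^*\otimes\underline{\R^k}$ over $\Sigma$ (where $L=\ker df$), then realize it as $dg'|_L$ for some smooth $g'$---is exactly the paper's route through condition~(3) of Theorem~\ref{imm-smoothing}. The gap is in how you extract that section from the merely continuous lift $g$.

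For $k=1$, your claim that corank one is forced ``since a linear map to $\R$ cannot be injective on a $2$-plane'' is not an argument: $g$ is only continuous, so there is no $dg_x$ to invoke. For $k\ge 2$ you correctly note that the direction of $g(u,v)-g(-u,v)$ need not converge as $u\to 0$, and you leave the proposed regularization of $g$ as an unresolved ``main obstacle''. The paper bypasses both difficulties with a single device, without touching $g$: pass to the Axelrod--Singer blow-up $\check N$ of $\tilde N=N\x N\setminus\Delta_N$. By Ronga's transversality theorem the closure $\check\Delta_f$ of $\Delta_f$ in $\check N$ is a manifold with boundary $\check\Sigma_f$, the unit sphere bundle of $\ker df$; since a manifold with boundary is homotopy equivalent to its interior, the $\Z/2$-equivariant map $\tilde g\:\Delta_f\cap U\to S^{k-1}$ extends to an equivariant map $\check\Sigma_f\to S^{k-1}$ with no further work (Lemma~\ref{morin projection}). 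For $k=1$ this already forces corank one (a fiber of $\check\Sigma_f$ over a corank-$2$ point would contain a circle with free involution, which admits no equivariant map to $S^0$) and trivializes $L$; in general, once $L$ is a line bundle, an equivariant map $S(L)\to S^{k-1}$ is precisely a nowhere-zero section of $L^{\oplus k}\cong L^*\otimes\underline{\R^k}$, i.e.\ the monomorphism you want. Your step three also needs more than a Taylor extension: $L$ is tangent to $\Sigma$ along $\Sigma^{1,1}$, so no tubular neighborhood of $\Sigma$ has $L$ in its normal direction; the paper handles this by induction over the strata $\Sigma^{1_r}$ (Lemma~\ref{referees-lemma} in the referee's argument, or the dual-cone construction in the author's proof of Theorem~\ref{imm-smoothing}).
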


Let us note that, for instance, every stable smooth map $f\:N^4\to\R^5$ is a corank one map 
(see \cite{GG}*{VI.5.2}), so we get, for instance, that if $f$ lifts to a topological immersion 
in $\R^7$, then it also lifts to a smooth immersion in there.

As the author learned from the referee, it is not hard to prove Theorem \ref{prim}
by standard methods of immersion theory (namely, those in Gromov's book \cite{Gr}).
While it may be not so easy to extract an explicit proof from the literature, the referee has kindly provided
a very clear and explicit, and largely self-contained proof (this is the subject of \S\ref{smoothing-referee}).
On the other hand, the author's original proof could be more relevant to Problem \ref{prob}.

\begin{remark}
Going in this direction, if a stable smooth map $f\:N^n\to M^m$, $m\ge n$, lifts to a smooth immersion 
$N\to M\x\R^k$, then it is easy to see that $\dim\ker df_x\le k$ for each $x\in N$, and if $\Sigma^k_f$ denotes 
the set of all $x\in N$ such that $\dim\ker df_x=k$, then $\ker df$ is trivial as a $k$-plane bundle over 
$\Sigma^k_f$.
On the other hand, if $f$ lifts to a topological immersion $N\to M\x\R^k$, it turns out that still
$\dim\ker df_x\le k$ for each $x\in N$, and also $\id\:\Sigma^k_f\to\Sigma^k_f$ is covered by 
a $\Z/2$-equivariant fiberwise map from the spherical bundle over $\Sigma^k_f$ consisting of all unit vectors
in $\ker df$ to the trivial $S^{k-1}$-bundle over $\Sigma^k_f$, where $\Z/2$ acts antipodally on the fibers 
(see the proof of Lemma \ref{morin projection}).
However this map need not be linear on the fibers, in contrast to the smooth case.
This points at one possible approach to Problem \ref{prob}.
\end{remark}

\begin{remark} P. M. Akhmetiev recently announced the following result (see \cite{ACR} and its expected update),
which could be relevant to Problem \ref{prob}: there exists a smooth knot $k\:S^{29}\to S^{31}$ such that 
the composition of $S^{29}\xr{k}S^{31}\subset S^{44}$ is not smoothly slice (i.e.\ does not bound a smooth
embedding $D^{30}\to S^{45}$).
Let us note that $S^{29}$ smoothly unknots in $S^{46}$ by Haefliger's theorem  (see \cite{Ad}*{\S VII.4})
and PL unknots in $S^{32}$ by Zeeman's theorem \cite{Ze0}.

The knot $k$ is a Brieskorn sphere.
In more detail, let $V$ be the complex hypersurface in $\C^{16}$ given by the equation $f(z)=0$, where
$f(z_1,\dots,z_{16})=z_1^3+z_2^2+\dots+z_{16}^2$.
Let $\Sigma$ be the intersection of $V$ with a small sphere $S^{31}$ about $0$ given by the equation
$|z_1|^2+\dots+|z_{16}|^2=\eps$.
Then $\Sigma$ is homeomorphic to $S^{29}$ (see \cite{Mi}*{8.5 and 9.1}), and $S^{31}\but\Sigma$ is
not homotopy equivalent to $S^1$ (see \cite{Mi}*{proof of 7.3}).
A Seifert surface $M$ of $\Sigma$ can be described as $\phi^{-1}(pt)$, where $\phi\:S^{31}\but\Sigma\to S^1$ 
is defined by $\phi(z)=f(z)/|f(z)|$ (see \cite{Mi}*{6.1}).%
\footnote{In fact, $\phi$ is a smooth bundle (see \cite{Mi}*{4.8}).
It is also known that $M$ is diffeomorphic to the intersection of $f^{-1}(c)$, where $|c|$ is small, with 
a small ball about $0$ (see \cite{Mi}*{5.11}) and is homotopy equivalent to the join of $S^{14}$ with 
the $3$-point set (see \cite{Mi}*{proof of 9.1}).}
The Kervaire invariant of $M$ is nonzero \cite{Le}*{\S3}, \cite{Mi}*{8.7}.
However, there also exists a closed framed manifold $N^{30}$ with nonzero Kervaire invariant \cite{Br} (see also
\cite{Akh2}).
Hence $\Sigma$ bounds a framed manifold, namely $M\# N$, with zero Kervaire invariant.
Then $\Sigma$ bounds a contractible manifold \cite{KM}*{5.5, 8.4} and hence is $h$-cobordant to $S^{29}$ 
\cite{KM}*{2.3}.
Therefore by Smale's theorem $\Sigma$ is diffeomorphic to $S^{29}$.
The knot $k$ is the composition of this diffeomorphism and the inclusion $\Sigma\subset S^{31}$.
\end{remark}

\section{Triangulation of lifts}\label{sec4}

Let $K$ be a simplicial complex and $K'$ a derived (i.e.\ weighted barycentric) subdivision of $K$.
For a simplex $\sigma$ of $K$, let $\hat\sigma$, or in more detail $\hat\sigma_{K'}$, denote its weighted
barycenter in $K'$.
The {\it dual cone} $\sigma^*$, or in more detail $\sigma^*_{K'}$, is the subcomplex of $K'$ consisting of all 
simplexes of the form $\hat\tau_1*\dots*\hat\tau_n$, where $\sigma\subset\tau_1\subsetneqq\dots\subsetneqq\tau_n$
are simplexes of $K$.
Thus $\sigma^*=\hat\sigma*\partial\sigma^*$, where the {\it derived link} $\partial\sigma^*$ is the subcomplex of 
$K'$ consisting of all simplexes of the form $\hat\tau_1*\dots*\hat\tau_n$, where 
$\sigma\subsetneqq\tau_1\subsetneqq\dots\subsetneqq\tau_n$ are simplexes of $K$.
If $K$ is a combinatorial $n$-manifold and $\sigma$ is a $k$-simplex, then $|\sigma^*|$ is an $(n-k)$-cell 
intersecting $\sigma$ at $\hat\sigma$.

\begin{lemma}\label{triangulation}
Let $f\:P\to Q$ be a non-degenerate simplicial map between finite simplicial complexes and $g\:|P|\to\R^k$ 
be a continuous map such that $f\x g\:|P|\to |Q|\x\R^k$ is an embedding.

Then there exist subdivisions $K$, $L$ of $P$, $Q$ and their derived subdivisions $K'$, $L'$ such that 
$f\:K\to L$ and $f\:K'\to L'$ are simplicial and for any distinct vertices $u$, $v$ of $K$ satisfying 
$f(u)=f(v)$, the convex hulls of $g(|u^*_{K'}|)$ and $g(|v^*_{K'}|)$ are disjoint.
\end{lemma}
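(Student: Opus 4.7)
The plan is to take $K$, $L$ to be sufficiently fine iterated barycentric subdivisions of $P$, $Q$, which automatically makes $f\:K\to L$ simplicial since barycenters map to barycenters under $f$. If $\mu$ denotes the mesh of $K$, every dual cell $|u^*_{K'}|$ sits inside the closed star of $u$ in $K$ and therefore has diameter at most $2\mu$ in $|P|$. By uniform continuity of $g$ on compact $|P|$, with modulus $\omega_g$, the $g$-image of every dual cell has diameter at most $\omega_g(2\mu)$, and since it contains $g(u)$, its convex hull lies in the closed ball of radius $\omega_g(2\mu)$ about $g(u)$ in $\R^k$. Thus the convex-hull-disjointness condition reduces to the pointwise inequality $|g(u)-g(v)|>2\omega_g(2\mu)$ for every pair of distinct vertices $u,v$ of $K$ with $f(u)=f(v)$.

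The main input for this inequality is that $f\x g$, being a topological embedding of the compact polyhedron $|P|$, is a homeomorphism onto its image, so $(f\x g)^{-1}$ is uniformly continuous. Restricted to pairs $(x,y)$ with $f(x)=f(y)$, the ambient distance is exactly $|g(x)-g(y)|$, so $|g(x)-g(y)|$ small with $f(x)=f(y)$ forces $|x-y|$ small. Arguing by contradiction, if the conclusion failed for every iterated barycentric subdivision $K_n$, there would be bad pairs $(u_n,v_n)$ of vertices of $K_n$ with $|g(u_n)-g(v_n)|\le 2\omega_g(2\mu_n)\to 0$, hence $|u_n-v_n|\to 0$. Passing to subsequences, $u_n,v_n\to u_*$ in $|P|$ with $u_n\in\Int(\sigma)$ and $v_n\in\Int(\tau)$ for fixed distinct simplices $\sigma,\tau$ of $P$ (distinctness by non-degeneracy of $f$), and $u_*\in\bar\sigma\cap\bar\tau$.

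The hardest part is extracting a quantitative contradiction from this picture, since such limit sequences occur geometrically near common faces where two sheets of $f$ meet. I would handle it by first approximating $g$ in $C^0$-norm by a PL map $g'$ close enough that $f\x g'$ is still an embedding (possible because embeddings form a $C^0$-open subset of maps from a compact domain, and simplicial approximations to $\R^k$-valued maps always exist), with enough slack that convex-hull disjointness for $g'$ transfers to $g$. With $g$ then assumed PL and linear on each simplex of a fixed subdivision, the linearisations at $u_*$ express $g(u_n)-g(v_n)$ as a linear function of $u_n-u_*$ once one uses $f(u_n)=f(v_n)$ to eliminate $v_n-u_*$; injectivity of $f\x g'$ near $u_*$ forces this linear map to be injective on the relevant subspace, yielding $|g(u_n)-g(v_n)|\ge c\,|u_n-v_n|$ for a positive constant $c$ uniform over the finitely many local configurations $(\sigma,\tau,u_*)$. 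Combining this lower bound with the linear modulus of continuity of PL $g$ and the scaling behaviour of dual cells in iterated barycentric subdivisions, one produces the required contradiction for $n$ sufficiently large and completes the proof.
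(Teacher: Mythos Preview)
Your argument has a genuine gap, and the strategy cannot be repaired along the lines you indicate.

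The reduction to the inequality $|g(u)-g(v)|>2\omega_g(2\mu)$ replaces the desired conclusion by a strictly stronger sufficient condition, and that condition already fails in the simplest case. Take $f\:[-1,1]\to[0,1]$, $f(x)=|x|$, and $g(x)=x$ (so $f\x g$ is a PL embedding). In the $n$-th iterated barycentric subdivision the closest pair of distinct vertices with equal $f$-image is $(u,v)=(\mu,-\mu)$ where $\mu$ is the mesh, so $|g(u)-g(v)|=2\mu$ while $2\omega_g(2\mu)=4\mu$. Your linearisation step then yields only $|g(u_n)-g(v_n)|\ge c\,|u_n-v_n|$, and since the offending pairs satisfy $|u_n-v_n|\sim\mu_n$ this is perfectly compatible with the upper bound $|g(u_n)-g(v_n)|\le C\mu_n$: both sides scale like $\mu_n$ and no contradiction ensues. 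The PL-approximation manoeuvre does not rescue this: lifts of $f$ to embeddings are \emph{not} $C^0$-open in the $g$-variable (with the same $f$ and $g(x)=x$, setting $g'(x)=x+h(x)$ with $h(x)=-x$ on $(-\eps,\eps)$ and $|h|\le\eps$ gives $g'(x)=g'(-x)$ for all $|x|<\eps$), and the existence of a nearby PL $g'$ with $f\x g'$ still injective is essentially what the lemma is being used to establish, so invoking it here is circular.

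The paper confirms that a single-scale argument cannot work: immediately after the lemma it exhibits an oscillatory $g$ (for the same $f(x)=|x|$) for which \emph{barycentric} derived subdivisions fail the conclusion for every choice of $K,L$. The paper's proof instead builds $K$ in stages $K_0\triangleright K_1\triangleright\cdots\triangleright K_n=K$, refining at stage $i+1$ only away from the skeleton $P^{(i)}$ of the original $P$, and then chooses a suitably \emph{weighted} (non-barycentric) derived subdivision $K'$. A vertex $v$ of $K$ lying in $P^{(i+1)}\but P^{(i)}$ is bounded away from $P^{(i)}$, so for any competing $u$ with $f(u)=f(v)$ one has $|g(u)-g(v)|\ge d_{i+1}>0$ for a stratum-dependent constant; meanwhile the weighting of $K'$ squeezes the dual cone $|v^*_{K'}|$ into a ball of radius $r_{i+1}$ chosen with $\omega_g(r_{i+1})<d_{i+1}/2$. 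It is this decoupling of scales---separation of the vertices at one scale, diameter of their dual cones at a much finer scale---that a uniform iterated barycentric subdivision cannot deliver.
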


This lemma will also be used in the proof of Theorem \ref{th2'}.

\begin{proof}
By the hypothesis, $f$ embeds every simplex of $P$.
Let $P^{(i)}$ and $Q^{(i)}$ denote the unions of all $i$-simplexes of $P$ and of $Q$.
We fix some metric on $|P|$.

Next we shall define subdivisions $K=K_n\triangleright\dots\triangleright K_0\triangleright P$
and $L=L_n\triangleright\dots\triangleright L_0\triangleright Q$, where $n=\dim|P|$, such that $f$ 
is simplicial as a map $K_i\to L_i$ for each $i$.

Let $d_0$ be the maximum of the distance $||g(u)-g(v)||$ over all pairs $(u,v)$ of distinct vertices of $P$ 
such that $f(u)=f(v)$.
Since $g$ is uniformly continuous, there exists an $r_0>0$ such that for any $x,y\in|P|$ at a distance $\le r_0$, 
$||g(x)-g(y)||<d_0/2$.
Let $K_0$ and $L_0$ be any subdivisions of $P$ and $Q$ such that $f\:K_0\to L_0$ is simplicial and every simplex 
of $K_0$ has diameter $<r_0$.
(Here $K_0$ is uniquely determined by $L_0$, and $L_0$ is chosen depending on $r_0$.) 

Let us assume that $K_i$ and $L_i$ are subdivisions of $K_0$ and $L_0$ such that $f\:K_i\to L_i$ is simplicial.
Let $X_i$ be the union of all simplexes of $K_i$ that are disjoint from $P^{(i)}$, and let $U_i$ be 
a neighborhood of $X_i$ whose complement is a neighborhood of $P^{(i)}$.
Let $d_{i+1}$ be the supremum of the distance $||g(x)-g(y)||$ over all pairs $(x,y)$ of distinct points of 
$P^{(i+1)}\cap U_i$ such that $f(x)=f(y)$.
Since $g$ is uniformly continuous, there exists an $r_{i+1}>0$ such that for any $x,y\in|P|$ at a distance 
$\le r_{i+1}$, $||g(x)-g(y)||<d_{i+1}/2$.
Let $K_{i+1}$ and $L_{i+1}$ be any subdivisions of $K_i$ and $L_i$ such that $f\:K_{i+1}\to L_{i+1}$ is simplicial,
$K_{i+1}$ has new vertices only in $X_i$, and every simplex of $K_{i+1}$ contained in $X_i$ 
($\Leftrightarrow$ disjoint from $P^{(i)}$) has diameter $<r_{i+1}$.
Let $V_i$ be a neighborhood of $X_i$ in $U_i$ such that for each simplex $\sigma*\tau$ of
$K_{i+1}$, where $\sigma\subset P^{(i)}$ and $\tau\subset X_i$, the diameter of $V_i\cap(\sigma*\tau)$
is $\le r_{i+1}$.

Let $K=K_n$ and $L=L_n$, where $n=\dim |P|$.
Let $K'$ and $L'$ be any derived subdivisions of $K$ and $L$ such that $f\:K'\to L'$ is simplicial and for each $i$,
every simplex of $K'$ that is disjoint from $P^{(i)}$ lies in $V_i$.
(Let us note that these conditions for different values of $i$ do not follow from each other.)
Then every simplex of $K'$ that is disjoint from $P^{(i)}$ has diameter $\le r_{i+1}$.

Let $v$ be a vertex of $K$ which lies in $P^{(i+1)}\but P^{(i)}$ for some $i$.
Then $|v^*_{K'}|$ lies in the $r_{i+1}$-ball centered at $v$.
Hence $g(|v^*_{K'}|)$ lies in the ball $B_v$ of radius $d_{i+1}/2$ centered at $g(v)$.
If $u$ is a vertex of $K$ such that $f(u)=f(v)$, then $u\in P^{(i+1)}\but P^{(i)}$ since $f$ is non-degenerate.
Then both $u$ and $v$ lie in $P^{(i+1)}\cap X_i$, and hence $B_u\cap B_v=\emptyset$.
Thus $g(|u^*_{K'}|)$ and $g(|v^*_{K'}|)$ have disjoint convex hulls.
\end{proof}

\begin{example}
It would be more convenient if the subdivisions $K'$, $L'$ in Lemma \ref{triangulation} could be chosen to be 
barycentric, but this is not possible in general.
For example, let $f\:[-1,1]\to [0,1]$ be defined by $f(x)=|x|$ and $g\:[-1,1]\to\R$ be defined by 
$g(0)=0$, $g(x)=x(-1+\cos\frac{2\pi}x)$ for $x>0$ and $g(x)=x(1+\cos\frac{2\pi}x)$ for $x<0$.
Then $f\x g\:[-1,1]\to[0,1]\x\R$ is an embedding.
Let us note that for $x>0$ we have $g(x)\le 0$, with $g(x)=0$ precisely when 
$|x|\in\{1,\frac12,\frac13,\dots\}$.
On the other hand, for $x<0$ we have $g(x)\ge 0$, with $g(x)=0$ precisely when 
$|x|\in\{\frac1{1.5},\frac1{2.5},\frac1{3.5},\dots\}$.
If $L$ (which triangulates $Q=[0,1]$) has an edge $e$ with vertex $0$ and other vertex $\eps$, then its 
barycenter is at $\frac\eps2$.
Let us note that for each $\eps\in(0,1]$, the interval $[\frac\eps2,\eps]$ contains a pair of 
consecutive members of the sequence $1,\frac1{1.5},\frac12,\frac1{2.5},\frac13,\frac1{3.5},\dots\dots$.
Consequently, the convex hulls of $g([\frac\eps2,\eps])$ and $g([-\eps,-\frac\eps2])$ are not disjoint.
\end{example}

\begin{theorem}\label{4.1}
Let $f\:N\to M$ be a non-degenerate PL map between compact polyhedra.
Then $f$ is a topological $k$-prem if and only if it is a PL $k$-prem.

Moreover, if $e\:N\to M\x\R^k$ is a topological embedding which lifts $f$, then $e$ is isotopic through 
lifts of $f$ to a PL embedding.

Furthermore, if $e$ is PL on a subpolyhedron $N_0$ of $N$, then the isotopy may be assumed to keep $N_0$ fixed.
\end{theorem}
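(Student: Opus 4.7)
The plan is to apply Lemma~\ref{triangulation} to produce triangulations adapted to the given topological lift $e=f\x g$, to construct a PL lift $e'=f\x g'$ by simplicial interpolation on the derived subdivision, and to connect $e$ to $e'$ by a straight-line homotopy whose injectivity is verified via the disjoint convex hulls from the lemma.

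First, apply Lemma~\ref{triangulation} to $f$ and $g$, obtaining triangulations $K,L$ of $N,M$ and derived subdivisions $K',L'$ so that for distinct vertices $u,v$ of $K$ with $f(u)=f(v)$, the convex hulls of $g(|u^*_{K'}|)$ and $g(|v^*_{K'}|)$ are disjoint. A short observation extends this disjointness to pairs of distinct simplexes $\sigma\ne\tau$ of $K$ with $f(\sigma)=f(\tau)$: the induced vertex bijection $\phi\colon\sigma\to\tau$ satisfying $f\phi=f$ must have a non-fixed vertex $v$ (since $\sigma\ne\tau$), and then $|\sigma^*_{K'}|\subseteq|v^*_{K'}|$ and $|\tau^*_{K'}|\subseteq|\phi(v)^*_{K'}|$, so the convex hulls of $g(|\sigma^*_{K'}|)$ and $g(|\tau^*_{K'}|)$ are also disjoint.

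Next, define $g'\colon|K|\to\R^k$ linearly on simplexes of $K'$ by $g'(\hat\rho)=g(\hat\rho)$ on every vertex of $K'$. To verify injectivity of $f\x g'$, suppose $x\ne y$ with $f(x)=f(y)$, let $\sigma,\tau$ be the minimal carriers in $K$, and let $\phi$ be the associated vertex bijection. Non-degeneracy forces $\sigma\ne\tau$ and $y$ equal to the linear image $\phi_*(x)$. Writing the minimal carrier of $x$ in $K'$ as $\hat{\rho_0}*\dots*\hat\sigma$, so $x=\sum_i\lambda_i\hat{\rho_i}$ with all $\lambda_i>0$, one has $y=\sum_i\lambda_i\hat{\phi(\rho_i)}$. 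For the smallest index $i_0$ with $\phi(\rho_{i_0})\ne\rho_{i_0}$ (which exists since $\rho_p=\sigma\ne\tau$), the terms with $i<i_0$ cancel in $g'(x)-g'(y)$, and since $\rho_i\supseteq\rho_{i_0}$ for $i\ge i_0$, the remainder can be rewritten as $B(Q_x-Q_y)$ with $B=\sum_{i\ge i_0}\lambda_i>0$, $Q_x\in\text{conv}(g(|\rho_{i_0}^*|))$, and $Q_y\in\text{conv}(g(|\phi(\rho_{i_0})^*|))$. These convex hulls are disjoint by the extended lemma, so $g'(x)\ne g'(y)$.

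For the isotopy, take the straight-line homotopy $g_t=(1-t)g+tg'$. In the generic case, where the first simplex $\alpha(x)$ in the $K'$-flag of $x$ is not contained in $\sigma\cap\tau$, pick a vertex $v\in\alpha(x)$ with $\phi(v)\ne v$; both $g(x),g'(x)\in\text{conv}(g(|v^*_{K'}|))$ and $g(y),g'(y)\in\text{conv}(g(|\phi(v)^*_{K'}|))$, so the entire path $g_t(x),g_t(y)$ remains in these two disjoint convex sets. In the degenerate case $\alpha(x)\subseteq\sigma\cap\tau$, I would refine the subdivisions in Lemma~\ref{triangulation} (which the proof of the lemma permits) so that $g'$ becomes uniformly close to $g$; then the straight line between $g(x)-g(y)$ and $g'(x)-g'(y)$ should avoid the origin. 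For the rel $N_0$ version, include $N_0$ as a subcomplex of $K$, and use that $e|_{N_0}$ being PL lets one further refine so that $g|_{N_0}$ is simplicial on $K'|_{N_0}$, forcing $g'=g$ on $N_0$ and making the homotopy stationary there.

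The main obstacle is the degenerate case of the isotopy: unlike for $g'$ alone, the dual cell convex-hull argument does not directly separate $g_t(x)$ from $g_t(y)$ for intermediate $t$, and ruling out the possibility that $g(x)-g(y)$ and $g'(x)-g'(y)$ are exactly anti-parallel requires either additional quantitative estimates controlled by the refinement, or a replacement of the straight-line homotopy by one better adapted to the dual cell stratification.
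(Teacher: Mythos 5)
Your construction of the PL lift $g'$ (linear interpolation of $g$ on the derived subdivision $K'$) is the paper's map $g_n$, and your injectivity argument for $f \x g'$ --- tracking the first index $i_0$ in the $K'$-flag of $x$ where the vertex bijection acts nontrivially --- is a legitimate variant of the paper's reduction to disjoint dual cones. Up to this point the two proofs coincide in substance.

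The gap you correctly flag in the degenerate case is real, and the paper addresses it by \emph{not} using the straight-line homotopy. Instead it constructs an $n$-parameter family $h_t$, $t \in I^n$, out of stacked Alexander tricks (one per level of the dual-cone filtration), and takes a path through that cube. The crucial structural property of $h_t$ is compatibility with the join decomposition $\sigma = (\sigma\cap\tau)*\tilde\sigma$, $\tau = (\sigma\cap\tau)*\tilde\tau$ of the minimal $K'$-carriers: writing $x \in z * \tilde x$, $y \in z * \tilde y$ with $z \in \sigma\cap\tau$ and $f(x)=f(y)$, the equality $h_t(x)=h_t(y)$ forces $h_t(\tilde x)=h_t(\tilde y)$. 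This reduces the injectivity check to the case $\sigma\cap\tau = \emptyset$, where Lemma~\ref{triangulation} applies directly. The straight-line homotopy $g_t = (1-t)g + t g'$ does not have this property: since $g'$ is affine on $K'$-simplexes, $g'(x)-g'(y) = s\bigl(g'(\tilde x)-g'(\tilde y)\bigr)$ respects the cone structure, but $g(x)-g(y)$ is not proportional to $g(\tilde x)-g(\tilde y)$, so the two contributions to $g_t(x)-g_t(y)$ cannot be disentangled and the reduction fails.

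Your proposed repair of refining until $g'$ is uniformly $\eps$-close to $g$ is not what the paper does, and it does not obviously close the gap: as pairs $x,y$ with $f(x)=f(y)$ approach each other near the singular set of $f$, the norm $\|g(x)-g(y)\|$ can be far below any fixed $\eps$, so uniform closeness of $g'$ to $g$ does not rule out antiparallelism of $g(x)-g(y)$ and $g'(x)-g'(y)$ at small scales. Your second suggestion --- a homotopy better adapted to the dual-cell stratification --- is exactly what the paper's $h_t$ is, and adopting it is what completes the proof.
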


\begin{proof}
We have $e=f\x g$, where $g\:N\to\R^k$ is the composition of $e$ with the projection.
Let $P$ and $Q$ be triangulations of $N$ and $M$ such that $f\:P\to Q$ is simplicial, $N_0$ is triangulated by 
a subcomplex $P_0$ of $P$ and $g$ is linear on the simplexes of $P_0$.
Let $K$, $L$ and $K'$, $L'$ be the subdivisions given by Lemma \ref{triangulation}.
Let $g_i\:N\to\R^k$ be the map that equals $g$ on the dual cone $|\sigma^*_{K'}|$ of each simplex $\sigma$ of $K$
of dimension $\ge i$, and is extended conically to all $|\sigma^*_{K'}|$ such that $\dim\sigma<i$, in the sense 
that $g_i\big(tx+(1-t)\hat\sigma\big)=tg_i(x)+(1-t)g(\hat\sigma)$ for each $x\in|\partial\sigma^*_{K'}|$.
Then $g_0=g$ and $g_n$ is simplicial on $K'$ (and in particular PL), where $n=\dim N$.
Each $g_i$ is homotopic to $g_{i+1}$ by a version of the Alexander trick.

In fact, these $n$ Alexander tricks can be done independently of each other.
This results in an $n$-homotopy $h_t\:N\to\R^k$, $t\in I^n$, which is defined as follows.
Let us write $t=(t_1,\dots,t_n)$, where each $t_i\in [0,1]$.
Every simplex of $K'$ lies in a simplex $\sigma$ of $K'$ of the form $\sigma=\hat\sigma_0*\dots*\hat\sigma_k$,
where $\sigma_0\subset\dots\subset\sigma_k$ is a full flag of simplexes of $K$ (in particular, 
each $\dim\sigma_i=i$).
Given an $s=(s_1,\dots,s_k)$, where each $s_i\in[0,1]$, let us define $x_i(s)$ recursively by 
$x_k(s)=\hat\sigma_k$ and $x_{i-1}(s)=(1-s_i)\hat\sigma_{i-1}+s_ix_i(s)$, and let us write $x(s)=x_0(s)$.
Let $v_i(s)=x(s_1,\dots,s_i,0,\dots,0)$, and let $w_i(s)$ be the image of $v_i(s)$ under the affine map sending 
each $\hat\sigma_i$ to $g(\hat\sigma_i)$.
(In other words, $w_i(s)=g_n\big(v_i(s)\big)$.)
Let us write $s_i'=\max(s_i,t_i)$, $\bar s_i=s_i/s'_i$ and $\bar s=(\bar s_1,\dots,\bar s_n)$.
Let us note that if each $s_i\le t_i$, then $x(\bar s)$ is the image of $x(s)$ under the affine map 
$v_0(t)*\dots*v_k(t)\to\sigma$ sending each $v_i(t)$ to $\hat\sigma_i$.
Let us define $y_i(s)$ recursively by $y_k(s)=g\big(x(\bar s)\big)$ and 
$y_{i-1}(s)=(1-s'_i)w_{i-1}(\bar s)+s'_iy_i(s)$.
(Thus $w_k(s)$ is not used.)
Then $h_t$ is defined by $h_t\big(x(s)\big)=y_0(s)$.
It is easy to see that $h_t|_{N_0}=g|_{N_0}$, $h_{(1,\dots,1)}=g$, $h_{(0,\dots,0)}=g_n$ and more generally 
each $h_{({\tiny\underbrace{0,\dots,0}_i},1,\dots,1)}=g_i$.

Let us fix some $t\in I^n$.
It is clear from the definition of $h_t$ that $h_t(\sigma)$ lies in the convex hull of $g(\sigma)$ for each
simplex $\sigma$ of $K'$.
Then it follows from Lemma \ref{triangulation} that for any distinct vertices $u$, $v$ of $K$ satisfying 
$f(u)=f(v)$ we have $h_t(|u^*_{K'}|)\cap h_t(|v^*_{K'}|)=\emptyset$.
Let us show that $f\x h_t$ is injective.
Suppose that $h_t(x)=h_t(y)$ for some distinct $x,y\in N$ such that $f(x)=f(y)$.
Let $\sigma$, $\tau$ be the minimal simplexes of $K'$ containing $x$ and $y$.
Then $\sigma=(\sigma\cap\tau)*\tilde\sigma$ and $\tau=(\sigma\cap\tau)*\tilde\tau$ for some 
simplexes $\tilde\sigma$ and $\tilde\tau$ such that $\tilde\sigma\cap\tilde\tau=\emptyset$.
If $\sigma\cap\tau\ne\emptyset$, then there exist unique points $z\in\sigma\cap\tau$ and
$\tilde x\in\tilde\sigma$, $\tilde y\in\tilde\tau$ such that $x\in z*\tilde x$ and 
$y\in z*\tilde y$.%
\footnote{Indeed, since $f$ is non-degenerate, it restricts to affine isomorphisms of $\sigma$
and $\tau$ with $\rho:=f(\sigma)=f(\tau)$.
We have $\rho=\alpha*\beta$, where $\alpha=f(\sigma\cap\tau)$ and 
$\beta=f(\tilde\sigma)=f(\tilde\tau)$.
Now $f(x)\in a*b$ for unique points $a\in\alpha$ and $b\in\beta$.
Let $z$ be the unique preimage of $a$ in $\sigma\cap\tau$ and $\tilde x$, $\tilde y$ be 
the unique preimages of $b$ in $\tilde\sigma$ and $\tilde\tau$.}
Clearly $f(\tilde x)=f(\tilde y)$ and it is not hard to see that $h_t(\tilde x)=h_t(\tilde y)$.
So by considering $\tilde x$ and $\tilde y$ instead of $x$ and $y$ we may assume that $\sigma\cap\tau=\emptyset$.

Then it is easy to see that $\sigma$ and $\tau$ are contained respectively in $|u^*_{K'}|$ and $|v^*_{K'}|$
for some distinct vertices $u$ and $v$ of $K$ such that $f(u)=f(v)$.
(Indeed, we have $\sigma=\hat\sigma_1*\dots*\hat\sigma_k$ and $\tau=\hat\tau_1*\dots*\hat\tau_k$ for some 
simplexes $\sigma_1\subsetneqq\dots\subsetneqq\sigma_n$ and $\tau_1\subsetneqq\dots\subsetneqq\tau_n$ of $K$.
Then $\sigma_1\ne\tau_1$ and $f(\sigma_1)=f(\tau_1)$, so $f(u)=f(v)$ for some vertex $u$ of $\sigma_1$ and
some vertex $v$ of $\tau_1$ such that $u\ne v$.)
Thus $x\in|u^*_{K'}|$ and $y\in|v^*_{K'}|$, where $h_t(|u^*_{K'}|)\cap h_t(|v^*_{K'}|)=\emptyset$, contradicting 
our hypothesis $h_t(x)=h_t(y)$.
\end{proof}

As a byproduct of the proof of Theorem \ref{4.1}, we also obtain the following result (whose proof additionally 
depends on a half-page argument from the companion paper \cite{partII}):

\begin{theorem} Let $f\:N\to M$ be a non-degenerate PL map between compact polyhedra and $e\:N\to M\x\R^k$ 
be a topological embedding which lifts $f$.

Then for each $\eps>0$ there exist a $\delta>0$ and a PL embedding $e^\star\:N\to M\x\R^k$ such that if $X$ is 
a space and $E\:N\x X\to M\x\R^k\x X$ is a topological embedding which lifts $f\x\id_X$ and is $\delta$-close 
to $e\x\id_X$, then $E$ is $\eps$-isotopic to $e^\star\x\id_X$ through lifts of $f\x\id_X$.

Moreover, if $e$ is PL on a subpolyhedron $N_0$ of $N$ and $E|_{N_0\x Y}=e|_{N_0}\x\id_Y$ for some $Y\subset X$, 
then the isotopy may be assumed to keep $N_0\x Y$ fixed.

Furthermore, if $X$ is a polyhedron and $E$ is PL, then the isotopy may be assumed to be PL.
\end{theorem}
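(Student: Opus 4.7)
The plan is to make the construction in the proof of Theorem \ref{4.1} uniform and parametric in $X$. First, apply Lemma \ref{triangulation} to $e = f \x g$ to obtain triangulations $K$ of $N$ and $L$ of $M$, with derived subdivisions $K'$ and $L'$, such that the compact convex hulls $C_u := \mathrm{conv}\, g(|u^*_{K'}|)$ and $C_v := \mathrm{conv}\, g(|v^*_{K'}|)$ are disjoint for any distinct vertices $u,v$ of $K$ with $f(u)=f(v)$. Since there are only finitely many such pairs, the minimum pairwise distance $\eta > 0$ between these convex hulls gives a uniform safety margin. If $e$ is PL on $N_0$, arrange (as in the proof of Theorem \ref{4.1}) that $N_0$ is triangulated by a subcomplex $P_0$ of $K$ and that $g$ is linear on the simplexes of $P_0$. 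Let $e^\star := f \x g^\star$, where $g^\star := g_n$ is the PL straightening from the proof of Theorem \ref{4.1}, simplicial on $K'$ and agreeing with $g$ at the vertices of $K'$.

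Choose $\delta > 0$ so small that $3\delta < \eta$, and (eventually) so small that the isotopy produced below has diameter at most $\eps$ in the $\R^k$-factor. Given $E\:N\x X \to M\x\R^k\x X$ lifting $f \x \id_X$ and $\delta$-close to $e\x\id_X$, write $E(x,s) = (f(x), G(x,s), s)$ with $\|G(x,s) - g(x)\| \le \delta$ for all $(x,s)$. Then for each $s \in X$, the fiberwise convex hull $\mathrm{conv}\, G(|u^*_{K'}|\x\{s\})$ lies in the $\delta$-neighborhood of $C_u$, so the analogue of Lemma \ref{triangulation}'s conclusion holds for $G(\cdot, s)$ with room to spare.

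The isotopy from $E$ to $e^\star \x \id_X$ is built in two stages. \emph{Stage 1.} Apply the $n$-parameter Alexander-trick homotopy $h_t$ from the proof of Theorem \ref{4.1} fiberwise over $X$, replacing each $g(\hat\sigma)$ by $G(\hat\sigma, s)$; traversing the diagonal from $t = (1, \ldots, 1)$ to $t = (0, \ldots, 0)$ yields an isotopy from $E = f\x G$ to $f \x G^\star$, where $G^\star(\cdot, s)$ is the simplicial-on-$K'$ straightening of $G(\cdot, s)$. The injectivity argument of Theorem \ref{4.1} carries over verbatim, using the fiberwise disjointness just established. \emph{Stage 2.} Use the PL interpolation $H_r(x,s) := (1-r)\, G^\star(x,s) + r\, g^\star(x)$, which at each vertex of $K'$ takes values within $\delta$ of $g$; extending linearly on simplexes of $K'$, the convex hull $\mathrm{conv}\, H_r(|u^*_{K'}|\x\{s\})$ again lies in the $\delta$-neighborhood of $C_u$, and injectivity of the corresponding lift follows in the same way.

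The side conditions are then routine. For the $N_0 \x Y$ clause: since $g$ is linear on $P_0$ we have $g^\star = g$ on $N_0$, while $G = g\pi_N$ on $N_0 \x Y$ by hypothesis, so $G^\star = g^\star$ on $N_0\x Y$ and both stages are stationary there. The final PL clause holds because every step of the construction is PL when $E$ is, provided $X$ is a polyhedron. The $\eps$-control follows by taking $\delta$ sufficiently small, since the entire isotopy remains within an $O(\delta)$-neighborhood of $e\x\id_X$ in the $\R^k$-factor. \textbf{The main obstacle} is verifying that the injectivity reduction in the proof of Theorem \ref{4.1} --- in particular, its inductive step passing from general minimal simplexes $\sigma, \tau$ with $h_t(x) = h_t(y)$ to the disjoint case $\sigma \cap \tau = \emptyset$ --- survives in the parametric setting. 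However, that reduction is purely combinatorial on $K'$, and the parameter $s$ enters only through the values $G(\hat\sigma, s)$, so it goes through without change.
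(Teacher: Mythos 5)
Your overall strategy matches the paper's: apply Lemma \ref{triangulation} once, straighten $E$ fiberwise via the $n$-parameter Alexander trick, then linearly interpolate to $e^\star=f\times g^\star$, and handle the $N_0\times Y$ and PL clauses by inspection. Your Stage 2 is a legitimate shortcut: instead of citing \cite{partII}*{Proposition \ref{II:emb-stable}} for stability of $K'$-linear embeddings (which is how the paper produces $\delta_2$), you argue the disjointness of the convex hulls of $H_r$-images of dual cones directly, and then invoke the injectivity reduction from the proof of Theorem \ref{4.1}; that reduction does work for maps linear on simplexes of $K'$, as you assert.

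However, there is a genuine gap in the $\eps$-control. You claim ``the entire isotopy remains within an $O(\delta)$-neighborhood of $e\times\id_X$ in the $\R^k$-factor,'' but this is false: Stage 1 moves each point of a simplex $\sigma$ of $K'$ throughout the convex hull of $G(\sigma\times\{s\})$, so the displacement is bounded only by $\operatorname{diam}\bigl(g(\sigma)\bigr)+O(\delta)$, and the first term is governed by the oscillation of the (merely continuous) function $g$ on the simplexes of $K'$, which no choice of $\delta$ can shrink. (Concretely, already with $X$ a point and $E=e$ the displacement at the end of Stage 1 is $\|g-g^\star\|_\infty$, a quantity entirely independent of $\delta$.) The missing step is the one the paper performs before ever invoking Lemma \ref{triangulation}: choose the triangulations $P$, $Q$ of $N$, $M$ so that the $g$-image of the star of every vertex of $P$ has diameter $\le\eps/2$; then $|v^*_{K'}|\subset\operatorname{st}(v,K)$ lies in a single $P$-star, and the ``mesh'' term above is $\le\eps/2$. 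Only after this is $\delta=\min(\delta_1,\delta_2,\eps/2)$ small enough to make the whole isotopy an $\eps$-isotopy. Note also that $e^\star$ must therefore be constructed after $\eps$ is given (the theorem statement allows this), whereas your construction of $e^\star$ does not depend on $\eps$ at all, which is a symptom of the same omission.
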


\begin{proof} We have $e=f\x g$, where $g\:N\to\R^k$ is the composition of $e$ with the projection.
Let $P$ and $Q$ be triangulations of $N$ and $M$ such that $f\:P\to Q$ is simplicial, the $g$-image of 
the star of every vertex of $P$ is of diameter $\le\eps/2$, and also $N_0$ is triangulated by a subcomplex $P_0$ 
of $P$ and $g$ is linear on the simplexes of $P_0$.
Let $K$, $L$ and $K'$, $L'$ be the subdivisions given by Lemma \ref{triangulation}.
Then there exists a $\delta_1>0$ such that for any distinct vertices $u$, $v$ of $K$ satisfying $f(u)=f(v)$, 
the convex hulls of $g(|u^*_{K'}|)$ and $g(|v^*_{K'}|)$ are at a distance $\ge 3\delta_1$.
Let $g^\star\:N\to\R^k$ equal $g$ on the vertices of $K'$ and be linear on the simplexes of $K'$.
Then $g^\star|_{N_0}=g|_{N_0}$ and by the proof of Theorem \ref{4.1} $e^\star:=f\x g^\star$ is an embedding.
By \cite{partII}*{proof of Proposition \ref{II:emb-stable}} there exists a $\delta_2>0$ such that $f\x\phi$ 
is an embedding for every map $\phi\:N\to\R^k$ that is $2\delta_2$-close to $g^\star$ and linear on 
the simplexes of $K'$.
Let $\delta=\min(\delta_1,\delta_2,\eps/2)$.

Now let $E$ be given by the hypothesis and let $e_x\:N\to M\x\R^k$ be defined by $\big(e_x(p),x\big)=E(p,x)$
for each $x\in X$.
We have $e_x=f\x g_x$, where $g_x\:N\to\R^k$ is the composition of $e_x$ with the projection.
Each $g_x$ is $\delta$-close $g$, so for each vertex $v$ of $K$ the convex hull of $g_x(|v^*_{K'}|)$ lies in 
the $\delta$-neighborhood of the convex hull of $g(|v^*_{K'}|)$.
In particular, for any distinct vertices $u$, $v$ of $K$ satisfying $f(u)=f(v)$, the convex hulls of 
$g_x(|u^*_{K'}|)$ and $g_x(|v^*_{K'}|)$ are disjoint (due to our choice of $\delta_1$).
Let $g_x^\star\:N\to\R^k$ equal $g_x$ on the vertices of $K'$ and be linear on the simplexes of $K'$,
and let us define $E^\star\:N\x X\to M\x\R^k\x X$ by $E^\star(p,x)=\big(f(p),g^\star_x(p),x\big)$.
Then $g_x^\star|_{N_0}=g_x|_{N_0}=g|_{N_0}=g^\star|_{N_0}$ for each $x\in Y$, and by the proof of 
Theorem \ref{4.1} $e_x$ is isotopic through lifts of $f$ to $f\x g_x^\star$ keeping $N_0$ fixed.
Moreover, the resulting isotopy $E_t$ between $E$ and $E^\star$ is continuous, and if $X$ is a polyhedron 
and $E$ is PL, then $E_t$ is a PL isotopy.
If $g_{xt}$ is the linear homotopy between $g_x^\star$ and $g^\star$, then $f\x g_{xt}$ is an isotopy 
(due to our choice of $\delta_2$).
Obviously, the resulting isotopy $E'_t$ between $E^\star$ and $e^\star\x\id_X$ is continuous, and if $X$ 
is a polyhedron and $E$ is PL, then $E'_t$ is a PL isotopy.
For each vertex $v$ of $K$, each time instance of the stacked homotopy 
$g_x\rightsquigarrow g_x^\star\rightsquigarrow g^\star$ sends $|v^*_{K'}|$ into the $\delta$-neighborhood of 
the convex hull of $g(|v^*_{K'}|)$ (by the proof of Theorem \ref{4.1}) and hence is $\eps$-close to $g$ 
(due to our choice of $\eps$).
\end{proof}

\begin{corollary} \label{loc-contr} Let $f\:N\to M$ be a non-degenerate PL map between compact polyhedra.

(a) The space of topological embeddings $N\to M\x\R^k$ which lift $f$ is locally contractible.

(b) Given a topological embedding $e\:N\to M\x\R^k$ which lifts $f$, for each $\eps>0$ there exists
a $\delta>0$ such that for each $n=0,1,\dots$, every PL embedding $N\x\partial B^n\to M\x\R^k\x\partial B^n$ 
which lifts $f\x\id_{\partial B^n}$ and is $\delta$-close to $e\x\id_{\partial B^n}$ bounds a PL embedding 
$N\x B^n\to M\x\R^k\x B^n$ which lifts $f\x\id_{B^n}$ and is $\eps$-close to $e\x\id_{B^n}$.
\end{corollary}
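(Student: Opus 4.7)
The plan is to derive both parts of the corollary directly from the preceding theorem, which conveniently produces a single PL embedding $e^\star$ together with a uniform $\delta$ that works for arbitrary parameter spaces $X$.

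For part (a), given $e$ and $\eps>0$, I would invoke the theorem to obtain $\delta>0$ and a PL embedding $e^\star\:N\to M\x\R^k$ lifting $f$. Let $V$ denote the open $\delta$-ball around $e$ in the space $\mathcal{E}_f$ of topological embeddings $N\to M\x\R^k$ lifting $f$, with the uniform topology. Define $\Phi\:N\x V\to M\x\R^k\x V$ by $\Phi(p,e')=\bigl(e'(p),e'\bigr)$. Since each $e'\in V$ is $\delta$-close to $e$ and lifts $f$, $\Phi$ is a topological embedding, lifts $f\x\id_V$, and is $\delta$-close to $e\x\id_V$. The theorem then supplies a lift-preserving $\eps$-isotopy from $\Phi$ to $e^\star\x\id_V$. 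Under the exponential correspondence this isotopy is a homotopy $V\x I\to\mathcal{E}_f$ from the inclusion $V\hookrightarrow\mathcal{E}_f$ to the constant map at $e^\star$, whose image lies in the $\eps$-ball around $e$. This contracts $V$ inside the $\eps$-ball and yields local contractibility.

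For part (b), use the same $\delta$. The case $n=0$ is the statement that $e^\star$ itself witnesses the existence of a PL lift $\eps$-close to $e$, the hypothesis being vacuous since $\partial B^0=\emptyset$. For $n\ge1$, apply the theorem with $X=\partial B^n$ (a polyhedron) and $E$ the given PL embedding $E_0$. This produces a PL $\eps$-isotopy $E_t\:N\x\partial B^n\to M\x\R^k\x\partial B^n$ from $E_0$ to $e^\star\x\id_{\partial B^n}$ through lifts of $f\x\id_{\partial B^n}$. Now parametrize $B^n$ as the cone $\bigl(\partial B^n\x[0,1]\bigr)/\bigl(\partial B^n\x\{1\}\bigr)$ and define $\bar E\:N\x B^n\to M\x\R^k\x B^n$ by $\bar E\bigl(p,[x,t]\bigr)=\bigl(E_t(p,x),[x,t]\bigr)$. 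This is well-defined at the cone point because $E_1=e^\star\x\id$, and it is a PL embedding: the $B^n$-coordinate is preserved, so injectivity reduces to the fact that each fiber map $E_t(\cdot,x)$ is an embedding. By construction $\bar E$ extends $E_0$, lifts $f\x\id_{B^n}$, and remains $\eps$-close to $e\x\id_{B^n}$ throughout.

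The substantive work has been done in the preceding theorem; there is no remaining obstacle of real difficulty. The only subtleties worth checking carefully are that $\Phi$ is genuinely a topological embedding (it is, because $N$ is compact and $e'\mapsto e'$ is tautologically continuous in the uniform topology on $\mathcal{E}_f$) and that the cone construction in part (b) produces a PL map — which it does, since the isotopy given by the theorem is PL whenever the input is PL and the parameter space is a polyhedron.
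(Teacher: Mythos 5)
Your derivation is correct, and it is the natural reading of the parametrized theorem that precedes the corollary (which the paper states without proof). In (a) the key observation is that the tautological evaluation map $\Phi(p,e')=\bigl(e'(p),e'\bigr)$ is a topological embedding lifting $f\times\id_V$ and $\delta$-close to $e\times\id_V$, so the theorem's $\eps$-isotopy, read through the exponential law (valid since $N$ is compact), contracts $V$ inside the $\eps$-ball around $e$ in the space of embedded lifts. In (b) the one step worth spelling out more than you did is why the coned-off map is PL: the level-preserving PL isotopy, composed with the PL cone quotient $\partial B^n\times I\to B^n$, is constant on the collapsed fibre $\partial B^n\times\{1\}$ because $E_1=e^\star\times\id$, so its graph is the image of a compact polyhedron under a PL map and hence a polyhedron projecting PL-bijectively onto $N\times B^n$; this makes $\bar E$ a PL map, and injectivity (checked coordinatewise as you do, using that $e^\star$ and each $E_t(\cdot,x)$ are embeddings) then makes it a PL embedding. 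With that elaboration the argument is complete and matches the intended use of the theorem.
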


\section{Smoothing of lifts} \label{smoothing-sec}

Given a space $N$, let $\Delta_N=\{(x,x)\in N\x N\}$ and $\tilde N=N\x N\but\Delta_N$.
Given a map $f\:N\to M$, let $\Delta_f=\{(x,y)\in\tilde N\mid f(x)=f(y)\}$, and if $f$ is smooth, 
let $\Sigma_f=\{x\in N\mid\ker df_x\ne 0\}$.
Thus $\Delta_{\Sigma_f}=\{(x,x)\in N\x N\mid\ker df_x\ne 0\}$.

A necessary condition for $f\:N\to M$ to be a $k$-prem is the existence of an equivariant map 
$\tilde g\:\Delta_f\to S^{k-1}$ with respect to the factor exchanging involution on $\Delta_f\subset\tilde N$
and the antipodal involution on $S^{k-1}$.
Namely, $\tilde g(x,y)=\frac{g(y)-g(x)}{||g(y)-g(x)||}$, where $g\:N\to\R^k$ is a map such that
$f\x g\:N\to M\x\R^k$ is an embedding.

\begin{lemma} \label{morin projection}
Let $f\:N^n\to M^m$, $n\le m$, be a stable smooth map that lifts to a topological immersion $g\:N\to M\x\R$.
Then $f$ is a corank one map and $\ker df$ is trivial as a line bundle over $\Sigma_f$.
\end{lemma}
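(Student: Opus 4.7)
Write $g = f \x h$ with $h \: N \to \R$ continuous. For each $x \in N$, the plan is to produce from $g$ a continuous $\Z/2$-equivariant map $\tilde g_x \: S(\ker df_x) \to S^0$ (antipodal action on both sides). Once established, both assertions follow: the corank bound, because $S^{k-1}$ is connected for $k \ge 2$ while $S^0$ is not and a $\Z/2$-equivariant map from a connected space to $S^0$ cannot be equivariant (it would have to be constant); and, over $\Sigma_f$ where $k=1$, because the maps $\tilde g_x$ assemble into a section of the double cover $S(\ker df) \to \Sigma_f$, trivializing $\ker df$ as a line bundle.

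To define $\tilde g_x(v)$ for $v \in S(\ker df_x)$, I would use the double-point set $\Delta_f = \{(y_1,y_2) \in \tilde N \mid f(y_1)=f(y_2)\}$. The local injectivity of $g$ at $x$ implies that on $\Delta_f \cap (U \x U)$ for a sufficiently small neighborhood $U$ of $x$, the antisymmetric continuous function $H(y_1,y_2) := h(y_1) - h(y_2)$ never vanishes, so its sign defines a locally constant $\Z/2$-equivariant map $\Delta_f \cap (U \x U) \to S^0$. The value $\tilde g_x(v)$ is to be the limit of this sign along any sequence $(y_1^i, y_2^i) \in \Delta_f$ converging to $(x,x)$ with normalized secant $(y_1^i - y_2^i)/\|y_1^i-y_2^i\|$ (in any chart) converging to $v$.

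Two singularity-theoretic ingredients must be verified for stable $f$: (i) for every $v \in S(\ker df_x)$ such a sequence of pairs in $\Delta_f$ exists, i.e.\ the closure of $\Delta_f$ in the real spherical blow-up of $N \x N$ along $\Delta_N$ surjects onto $S(\ker df_x)$ modulo antipodal identification; and (ii) this closure is locally connected along each such limit direction, so that the sign of $H$ extends continuously to the exceptional fiber, making $\tilde g_x$ well-defined and jointly continuous in $v$ and $x$. I would establish (i) and (ii) by case analysis in the Mather--Morin local normal forms for stable map-germs: for Morin corank one singularities (fold, cusp, swallowtail, \ldots) one directly writes down the pairs realising each $v \in \ker df_x$; the nontrivial case is a corank $k \ge 2$ stable point, where one has to solve the local defining equations of $\Delta_f$ in the explicit Mather normal form and check that every kernel direction is indeed hit by some family of double-point pairs accumulating at $(x,x)$.

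The main obstacle is ingredient (i), the surjectivity of limit secants onto $S(\ker df_x)$: at corank one this is straightforward and essentially present in the folklore of Whitney--Morin singularities, but a self-contained argument at corank $\ge 2$ requires extracting enough information from the Mather classification of stable germs; once (i) is in place, (ii) follows from the fact that the blown-up $\Delta_f$ is a manifold-with-corners with well-understood strata, together with a standard connectedness argument to propagate the sign across.
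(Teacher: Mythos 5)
Your proposal follows the same overall architecture as the paper's proof: both constructions pass from the topological immersion $g=f\x h$ to an equivariant map from (a boundary piece of) the blown-up double-point set to $S^0$; both deduce the corank bound from the connectedness of $S^{k-1}$ for $k\ge 2$ (an equivariant map from a connected space to $S^0$ cannot exist); and both obtain triviality of $\ker df$ as a line bundle from the resulting equivariant section of the unit-sphere bundle over $\Sigma_f$. Where you and the paper differ is in how the technical ingredients (i) and (ii) are secured. The paper invokes Ronga's transversality theorem once and for all: for stable $f$, the closure $\check\Delta_f$ of $\Delta_f$ in the Axelrod--Singer compactification $\check N$ is a smooth manifold with boundary \emph{exactly} $\check\Sigma_f$, the set of unit vectors in $\ker df$. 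That single statement gives you both (i) (every direction in $S(\ker df_x)$ is a limit of secants of double points) and (ii) (near each boundary point the interior is locally a half-ball, hence connected, so the locally constant sign of $H$ extends continuously). It also allows a cleaner bookkeeping: rather than defining $\tilde g_x(v)$ pointwise as a limit and verifying joint continuity, one observes that the manifold with boundary $\check\Delta_f\cap\check U$ is equivariantly homotopy equivalent to its interior $\Delta_f\cap U$, so the equivariant map to $S^0$ on the interior transfers to $\check\Delta_f\cap\check U$ and then restricts to the boundary $\check\Sigma_f$. Your plan of attacking (i) and (ii) by explicit Mather--Morin normal form analysis is viable but, as you yourself flag, heavy at corank $\ge 2$; Ronga's theorem is precisely the tool that replaces that case analysis. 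One small imprecision worth noting: in (ii) you ask for local connectedness of the \emph{closure} along limit directions, but what is actually needed is connectedness of $\Delta_f$ itself in small punctured neighborhoods of each boundary point of $\check\Delta_f$ (so that the locally constant sign is constant there); the manifold-with-boundary structure supplies this directly, whereas local connectedness of the closure alone would not rule out the sign oscillating between branches approaching the same boundary point.
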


\begin{proof} Let $\check N$ be the usual compactification of $\tilde N=N\x N\but\Delta_N$ by a copy of 
the total space $SN$ of the spherical tangent bundle of $N$ (also known as the Axelrod--Singer compactification;
see \cite{Ro}, \cite{FM}, \cite{partII}*{\S\ref{II:FM-AS}}).
Let $\check\Sigma_f\subset SN$ be the set of all unit tangent vectors of $N$ that lie in $\ker df$.
Since $f$ is stable, by a transversality theorem of F. Ronga \cite{Ro} (see also 
\cite{partII}*{Theorem \ref{II:xx-0} and Corollary \ref{II:identification}}) the closure $\check\Delta_f$ 
of $\Delta_f$ in $\check N$ is a manifold with boundary $\check\Sigma_f$.
The immersion $g$ yields an equivariant map $\tilde g\:\Delta_f\cap U\to S^0$ for some $\Z/2$-invariant 
open neighborhood $U$ of $\Delta_N$ in $N\x N$.
Let $\check U$ be the preimage of $U$ in $\check N$.
Then the manifold with boundary $\check\Delta_f\cap\check U$ is equivariantly homotopy equivalent to its interior
$\Delta_f\cap U$.
Hence $\check\Delta_f\cap\check U$ also admits an equivariant map to $S^0$, and so does its boundary 
$\check\Sigma_f$.

Now suppose that $df_x\:T_xN\to T_{f(x)}M$ has kernel of dimension $\ge 2$ for some $x\in N$.
Then some unit vector $v\in\ker df_x$ can be deformed into $-v$ through unit vectors in $\ker df_x$.
Hence the set $\check\Sigma_f$ of all unit vectors in $\ker df$ admits no equivariant map to $S^0$,
which is a contradiction. 
Thus $f$ is a corank one map.

Finally, suppose that the line bundle $\ker df$ is nontrivial.
Then it is nontrivial over some loop $l$ in $\Sigma_f$.
Then for any point $x$ in $l$, each unit vector $v\in\ker df_x$ deforms into $-v$ upon traversing along $l$.
Hence $\check\Sigma_f$ admits no equivariant map to $S^0$, again.
\end{proof}

\begin{lemma} \label{smoothcurve}
If $f\:N\to M$ is a stable smooth map, then for each $x\in N$ and each $v\in\ker df_x\but 0$ there exists 
a smooth curve $\gamma\:\R\to N$ such that $\gamma(0)=x$, $\gamma'(0)=v$ and 
$f\big(\gamma(t)\big)=f\big(\gamma(-t)\big)$ for each $t\in\R$.
\end{lemma}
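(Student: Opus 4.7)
The plan is to produce a smooth germ of $\gamma$ at $t=0$ with the required properties and then extend to a global map $\R\to N$: any such germ extends via reparametrization by a smooth even cutoff, which preserves the evenness of $f\circ\gamma$. To produce the germ I would work inside the algebraic blow-up $B:=\mathrm{Bl}_{\Delta_N}(N\x N)$ of $N\x N$ along the diagonal, whose exceptional divisor is $E=PTN$, the projectivized tangent bundle of $N$.

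The swap involution $\tau\:N\x N\to N\x N$, $(y_1,y_2)\mapsto(y_2,y_1)$, lifts to a smooth involution of $B$. Since $\tau$ negates vectors in the antidiagonal normal direction to $\Delta_N$, and negation is trivial in projective coordinates, the fixed locus of $\tau$ on $B$ is exactly $E$. By Ronga's theorem (applied as in the proof of Lemma \ref{morin projection}, now in the algebraic rather than the spherical blow-up), the closure $\check\Delta_f$ of $\Delta_f$ in $B$ is a smooth $\tau$-invariant submanifold meeting $E$ exactly in $P(\ker df|_{\Sigma_f})$, which is precisely the fixed locus of $\tau|_{\check\Delta_f}$.

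At the fixed point $p:=[v]\in P(\ker df_x)\subset\check\Delta_f$, the $(-1)$-eigenspace of $\tau_*$ on $T_p\check\Delta_f$ is one-dimensional, and the blow-down $\pi_*\:T_pB\to T_{(x,x)}(N\x N)$ maps it isomorphically onto the line spanned by the antidiagonal vector $(v,-v)$ (the point $p=[v]$ itself encodes this projective direction). Choosing $\hat v$ in this $(-1)$-eigenspace with $\pi_*\hat v=(v,-v)$, and letting $\hat c(t):=\exp_p(t\hat v)$ with respect to any $\tau$-invariant Riemannian metric on $\check\Delta_f$, yields a smooth $\tau$-equivariant curve $\hat c\:(-\eps,\eps)\to\check\Delta_f$. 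Setting $c:=\pi\circ\hat c=(\alpha,\beta)$, one obtains a smooth curve with $c(0)=(x,x)$, $c'(0)=(v,-v)$, $c(t)\in\Delta_f$ for $0<|t|<\eps$, and $c(-t)=\tau c(t)$; then $\gamma:=\alpha$ satisfies $\gamma(0)=x$, $\gamma'(0)=v$, and $f(\gamma(-t))=f(\beta(t))=f(\alpha(t))=f(\gamma(t))$.

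The main obstacle is the algebraic-blow-up version of Ronga's theorem, which produces $\check\Delta_f$ as a smooth closed submanifold of $B$ rather than as a manifold with boundary in the Axelrod--Singer compactification. The two are related by doubling along $\check\Sigma_f$ via the fiber-antipodal identification, which matches the $\tau$-action on the sphere bundle; verifying that this doubling yields a smooth submanifold on which $\tau$ acts smoothly (with fixed locus $P(\ker df|_{\Sigma_f})$) is the technical point. An equivalent but more hands-on route, working entirely in the spherical blow-up, would choose a smooth path in $\check\Delta_f$ from $(x,v/|v|)$ into the interior and glue it with its $\tau$-image; the difficulty there is the choice of collar parametrization so that the ``center'' and ``direction'' functions along the path are smooth even functions of the collar parameter, which is needed for the glued curve to be smooth at $t=0$ to all orders.
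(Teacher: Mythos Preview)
Your approach is essentially the one the paper indicates: the paper does not give a self-contained proof here, but simply records that the lemma ``is an easy consequence of a transversality theorem of F.~Ronga'' and defers all details to the companion paper. Your sketch supplies exactly those details, with Ronga's theorem as the key input, so the two arguments are the same in substance.

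The one genuine variation is that you work in the \emph{algebraic} blow-up $B$ rather than the Axelrod--Singer (spherical) compactification used elsewhere in the paper. This is a good choice for the present purpose: in $B$ the swap involution $\tau$ has a smooth fixed-point set $E=PTN$ rather than a free action on a boundary, so the equivariant exponential map at $p=[v]$ directly produces a $\tau$-equivariant smooth curve without any gluing at $t=0$. Your identification of the $(-1)$-eigenspace of $\tau_*$ on $T_p\check\Delta_f$ with the normal line to $E$, and of its image under $\pi_*$ with $\R\cdot(v,-v)$, is correct (it follows from $\tau$-equivariance of $\pi$ and the fact that the $(+1)$-eigenspace of $\tau_*$ on $T_{(x,x)}(N\x N)$ is the diagonal). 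The technical point you flag --- that the closure of $\Delta_f$ in $B$ is a smooth closed $\tau$-invariant submanifold transverse to $E$ --- is indeed the crux, and is precisely the content of Ronga's result in the form the paper cites; the passage between the spherical and algebraic pictures is the standard $2:1$ boundary identification. One small wording fix: the reparametrization you use to extend the germ to all of $\R$ should be a smooth \emph{odd} function $\rho\:\R\to(-\eps,\eps)$ with $\rho'(0)=1$ (so that $f\circ\gamma\circ\rho$ remains even and $(\gamma\circ\rho)'(0)=v$), not an ``even cutoff''.
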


This is an easy consequence of a transversality theorem of F. Ronga \cite{Ro} (for the details 
see \cite{partII}*{Theorem \ref{II:xx-0} and Lemma \ref{II:smoothcurve}}).

\begin{lemma} \label{homotopy} 
(a) Let $f\:N\to M$ be a map between topological spaces such that $f^{-1}(p)$ is discrete for each $p\in M$.
Then embedded lifts $f\x g,\,f\x g'\:N\to M\x\R$ of $f$ are isotopic through lifts of $f$ if and only if the maps
$\tilde g,\tilde g'\:\Delta_f\to S^0$ coincide.

(b) Let $f\:N^n\to M^m$, $n\le m$, be a stable smooth map.
Then smoothly embedded lifts $f\x g,\,f\x g'\:N\to M\x\R$ of $f$ are smoothly isotopic through lifts of $f$ 
if and only if the maps $\tilde g,\tilde g'\:\Delta_f\to S^0$ coincide.
\end{lemma}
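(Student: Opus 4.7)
The plan is to prove both directions of both parts using the same linear homotopy $g_t=(1-t)g+tg'$, and to set $e_t:=f\x g_t$. The necessity direction of (a) is immediate from the discreteness of $S^0$: given any continuous isotopy $H_t=f\x g_t$ of lifts of $f$, $t\in[0,1]$, and any fixed $(x,y)\in\Delta_f$, the map $t\mapsto\tilde g_t(x,y)\in S^0$ is continuous, hence constant, so $\tilde g=\tilde g'$. Since a smooth isotopy is in particular continuous, this also settles necessity in (b).

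For sufficiency in (a) I check injectivity of $e_t$: if $f(x)=f(y)$ with $x\ne y$, then $(x,y)\in\Delta_f$, so by hypothesis the nonzero numbers $g(y)-g(x)$ and $g'(y)-g'(x)$ have the same sign, hence so does their convex combination $g_t(y)-g_t(x)$; thus $e_t(x)\ne e_t(y)$. To promote injectivity to a topological embedding I rely on compactness of $N$ (implicit throughout the paper) together with the Hausdorff property of $M\x\R$, so that a continuous injection is automatically an embedding.

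For sufficiency in (b) I use the same linear homotopy and additionally check that $e_t$ is a smooth immersion at each $x\in\Sigma_f$. By Lemma \ref{morin projection}, $\dim\ker df_x\le 1$; fix a nonzero $v\in\ker df_x$. Since $f\x g$ and $f\x g'$ are smooth immersions, $dg_x(v)$ and $dg'_x(v)$ are both nonzero. To compare their signs, apply Lemma \ref{smoothcurve} to obtain a smooth curve $\gamma\:\R\to N$ with $\gamma(0)=x$, $\gamma'(0)=v$ and $f(\gamma(t))=f(\gamma(-t))$ for all $t$. Since $\gamma$ is an immersion near $0$, the points $\gamma(t),\gamma(-t)$ are distinct for small $t\ne 0$, so $(\gamma(t),\gamma(-t))\in\Delta_f$, and a Taylor expansion gives
\[
g(\gamma(-t))-g(\gamma(t))=-2t\,dg_x(v)+O(t^2),
\]
whence $\tilde g(\gamma(t),\gamma(-t))=-\mathrm{sign}\,dg_x(v)$ for small $t>0$, and analogously for $\tilde g'$. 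The hypothesis $\tilde g=\tilde g'$ therefore forces $dg_x(v)$ and $dg'_x(v)$ to have the same sign, so $d(g_t)_x(v)=(1-t)dg_x(v)+t\,dg'_x(v)$ is a same-sign convex combination, hence nonzero for every $t\in[0,1]$. Thus $e_t$ is a smooth immersion at $x$; at points outside $\Sigma_f$ this is automatic. Combined with injectivity (argued as in (a)) and compactness, each $e_t$ is a smooth embedding, and $(t,x)\mapsto e_t(x)$ is manifestly smooth, yielding the desired smooth isotopy.

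The main technical point I expect is extracting the sign comparison in part (b) via Lemma \ref{smoothcurve}; beyond that, and the routine passage from injection to embedding via compactness, the argument is essentially bookkeeping.
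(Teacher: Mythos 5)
Your proof is correct and follows essentially the same route as the paper: the same linear homotopy $g_t=(1-t)g+tg'$, with sufficiency in (a) verified by the sign-preservation argument and in (b) by comparing signs of $dg_x$ and $dg'_x$ on $\ker df_x$ via Lemma~\ref{morin projection} and Lemma~\ref{smoothcurve}. Your pointwise Taylor-expansion treatment of (b) is a minor repackaging of the paper's use of a global trivialization of $\ker df$ to define $\hat g$, and you helpfully spell out the easy necessity direction (continuity of $t\mapsto\tilde g_t(x,y)$ into the discrete $S^0$) which the paper leaves implicit.
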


\begin{proof}[Proof. (a)] Let $g_t=(1-t)g+tg'$.
For each pair $(x,y)\in\Delta_f$ the vectors $g(x)-g(y)$ and $g'(x)-g'(y)$ are of the same sign.
Hence $g_t(x)-g_t(y)$ is also of the same sign, and in particular nonzero, for each $t\in I$.
Thus each $f\x g_t\:N\to M\x\R$ is an embedding.
\end{proof}

\begin{proof}[(b)] Since $g$ and $g'$ are smooth, so is the homotopy $g_t$ constructed in (a).
Since $f\x g_t$ is an isotopy by (a), it suffices to show that each $f\x g_t$ is a smooth immersion.
Since $df_x\x dg_x\:T_xN\to T_{f(x)}N\x\R$ is injective for each $x\in N$, $dg_x|_{\ker df_x}\:\ker df_x\to\R$ 
is an isomorphism for each $x\in\Sigma_f$.
By Lemma \ref{morin projection} $\ker df$ is a trivial line bundle over $\Sigma_f$, so we may identify
each $\ker df_x$ with $\R$.
Let $\hat g\:\Sigma_f\to\R$ be defined by $\hat g(x)=dg_x|_{\ker df_x}(1)$.
Since $f$ is stable, by Lemma \ref{smoothcurve} the sign of $\hat g(x)$ is determined by $\tilde g(y,y')$ for 
a pair $(y,y')\in\Delta_f$ that is sufficiently close to $(x,x)$.
Hence it is the same as that of $\hat g'(x)$.
Then $\hat g_t(x)$ is also of the same sign, and in particular nonzero, for each $t\in I$. 
Thus each $f\x g_t\:N\to M\x\R$ is a smooth immersion.
\end{proof}

Let $f\:N^n\to M^m$, $n\le m$, be a corank one stable smooth map.
Then we have $\Sigma_f=\Sigma^{1,0}_f\cup\Sigma^{1,1,0}_f\cup\dots$, where $\Sigma_f^{1_r,0}$ can be nonempty 
only when $(m-n+1)r\le n$.
By Morin's theorem \cite{Mo} $f$ is locally $C^\infty$-left-right-equivalent at each $p\in\Sigma_f^{1_r,0}$ 
to the map $F_r\:\R^n\to\R^m$, defined by $F_r(t_1,\dots,t_{n-1},x)=(t_1,\dots,t_{n-1},y,z_1,\dots,z_{m-n})$, 
where $y=t_1x+\dots+t_{r-1}x^{r-1}+x^{r+1}$ and each $z_i=t_{ir}x+\dots+t_{ir+r-1}x^r$.

The Morin map $F_r\:\R^n\to\R^m$ has two obvious lifts $\Phi_r^\pm\:\R^n\to\R^{m+1}$, defined by 
$\Phi_r^\pm(t_1,\dots,t_{n-1},x)=(t_1,\dots,t_{n-1},y,z_1,\dots,z_{m-n},\pm x)$, which are smooth embeddings.

So far we were assuming that $r>0$.
But we may also consider $F_0$, which is the inclusion of $\R^n$ onto $\R^n\x 0\subset\R^m$.
Let $\Phi_0^\pm=F_0$.

\begin{theorem} \label{smoothing} Let $f\:N^n\to M^m$, $n\le m$, be a stable smooth map from a compact smooth 
manifold to a smooth manifold. 
Then $f$ is a topological $1$-prem if and only if it is a smooth $1$-prem. 

Moreover, if $e\:N\to M\x\R$ is a topological embedding which lifts $f$, then $e$ is isotopic through 
lifts of $f$ to a smooth embedding.
\end{theorem}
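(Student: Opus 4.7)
The plan is to reduce the theorem, via Lemma~\ref{homotopy}(a), to constructing a single smooth map $g'\:N\to\R$ such that $f\x g'$ is a smooth embedding and $\tilde g'=\tilde g$ as equivariant maps $\Delta_f\to S^0$. Once such a $g'$ is in hand, Lemma~\ref{homotopy}(a) supplies a topological isotopy through lifts of $f$ from $e=f\x g$ to the smooth embedding $f\x g'$, which is exactly what the theorem asks for.

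To build $g'$, I first invoke Lemma~\ref{morin projection} to conclude that $f$ is corank one and that $\ker df$ is trivial over $\Sigma_f$. The equivariant map $\tilde g$ then determines a continuous orientation $o$ of $\ker df$ along $\Sigma_f$: for $x\in\Sigma_f$ and a sequence $(y_n,y_n')\in\Delta_f$ tending to $(x,x)$, the normalized vector $(y_n-y_n')/\|y_n-y_n'\|$, in a fixed local chart, converges to a unit vector of $\ker df_x$ whose sign is prescribed by the locally constant value $\tilde g(y_n,y_n')\in S^0$. Morin's theorem next furnishes coordinates around each $p\in\Sigma_f^{1_r,0}$ in which $f$ equals $F_r$, and the canonical lifts $\Phi_r^\pm$ have opposite signs along $\ker df$; at each point I pick the one matching $o$. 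Covering a neighborhood $U$ of $\Sigma_f$ by finitely many such Morin charts and patching these local lifts by a partition of unity produces a smooth $g_U\:U\to\R$. Outside $U$ the map $f$ is a smooth immersion, so $g$ is $C^\infty$-approximable there, and a second partition of unity glues this approximation with $g_U$ into a global smooth candidate $g'\:N\to\R$.

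The main obstacle is to ensure that $f\x g'$ is globally injective. Near $\Sigma_f$ this is immediate from the Morin normal form (the two sheets separate in the $g'$-direction with opposite signs). Globally the argument is more delicate: using the compact manifold-with-boundary $\check\Delta_f$ with $\partial\check\Delta_f=\check\Sigma_f$ that appears in the proof of Lemma~\ref{morin projection}, together with the continuous extension of $\tilde g$ across $\check\Sigma_f$ without zeros, one obtains a positive lower bound on $|g(x)-g(y)|$ relative to a natural rescaling on the diagonal. This bound persists under sufficiently $C^0$-small, Morin-compatible perturbations of $g$, so provided the partition-of-unity weights are tuned to respect both Morin-compatibility on $U$ and tight $C^0$-closeness to $g$ off $U$, the candidate $g'$ will be a smooth embedding lift, completing the construction.
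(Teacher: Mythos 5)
Your overall plan---reduce to constructing a smooth $g'$ with $\tilde g'=\tilde g$, then apply Lemma~\ref{homotopy}(a)---is exactly the reduction the paper uses, and invoking Lemma~\ref{morin projection} plus Morin's normal form to get orientation-compatible local lifts near $\Sigma_f$ is also on the same track. The gap is in the global injectivity step, and it is a genuine one, not a fixable detail.

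You assert that the nowhere-vanishing extension of $\tilde g$ to $\check\Delta_f$ gives ``a positive lower bound on $|g(x)-g(y)|$ relative to a natural rescaling on the diagonal.'' But for $k=1$ the map $\tilde g\colon\Delta_f\to S^0$ takes values in $\{-1,+1\}$; it is locally constant, and its extension across $\check\Sigma_f$ records only the \emph{sign} of $g(x)-g(y)$, with no quantitative content whatsoever. Since $g$ is merely continuous, $|g(x)-g(y)|$ can decay arbitrarily fast as $(x,y)$ approaches the diagonal---there is no ``natural rescaling'' against which a positive lower bound is available. So the later claim that this bound ``persists under sufficiently $C^0$-small, Morin-compatible perturbations'' has nothing to persist. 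Concretely: on the overlap region between ``near $\Sigma_f$'' and ``far from $\Sigma_f$,'' you cannot rule out that the smooth candidate $g'$ merges two sheets that $g$ barely separates, because $C^0$-closeness to $g$ does not control injectivity where $g$ itself separates by an amount going to zero. This is precisely the difficulty the paper resolves with Lemma~\ref{triangulation}: the triangulation is refined until the \emph{convex hulls} of $g$ on the dual cones $|u^*_{K'}|$, $|v^*_{K'}|$ (for distinct vertices $u,v$ with $f(u)=f(v)$) become disjoint. That disjointness is a compactness statement about $g$ itself, not about a putative modulus of injectivity, and it is what lets one build the smooth lift cone by cone (via Lemma~\ref{homotopy}(b)) while keeping the sign data $\tilde g$ unchanged at every stage. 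Your proposal has no analogue of this mechanism.

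A secondary, more repairable issue: patching the Morin lifts $\Phi_r^{\pm}$ over $U$ by a partition of unity $\{\rho_\alpha\}$ does not automatically yield $d g_U(v)>0$ along $\ker df$. One has $d\bigl(\sum\rho_\alpha g_\alpha\bigr)(v)=\sum\rho_\alpha\,dg_\alpha(v)+\sum g_\alpha\,d\rho_\alpha(v)$, and the second sum involves derivatives of bump functions with no sign control; this must be dominated, e.g.\ by first normalizing the $g_\alpha$ to agree along $\Sigma_f$, or by the iterative scalar-extension trick used in Lemma~\ref{referees-lemma}(c) of the referee's proof of Theorem~\ref{szucs}. You should either incorporate such a correction or follow the paper's cone-by-cone extension, which sidesteps the issue by using the isotopy of Lemma~\ref{homotopy}(b) on each boundary $\partial\sigma^*$ rather than a linear average.
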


\begin{proof}
We have $e=f\x g$, where $g\:N\to\R$ is the composition of $e$ with the projection.
Since $f$ is stable, by Lemma \ref{smoothcurve} for each $x\in\Sigma_f$ and each $v\in\ker df_x\but\{0\}$ there exists 
a smooth curve $\gamma\:\R\to N$ such that $\gamma(0)=x$, $\gamma'(0)=v$ and $\big(\gamma(t),\gamma(-t)\big)\in\Delta_f$ 
for each $t\in\R\but\{0\}$.
The proof of Lemma \ref{morin projection} yields a trivialization $\eps$ of $\ker df$ as a line bundle over 
$\Sigma_f$ such that $\eps_x(v)$ and $\tilde g\big(\gamma(t),\gamma(-t)\big)$ are of the same sign for all
$x$, $v$ and $\gamma$ as above.

By a theorem of A. Verona \cite{Ve} (see also \cite{partII}*{Theorem \ref{II:Shiota}}) 
we may assume that $f$ is simplicial in some smooth triangulations of $N$ and $M$.
Let $K$, $L$ and $K'$, $L'$ be the subdivisions given by Lemma \ref{triangulation}.
Clearly, each $\Sigma^{1_r,0}_f$ lies in the $(n-r)$-skeleton of $K$.
If $\tau$ is an $i$-simplex of $L$, we may assume (by doing some smoothing) that its dual cone $\tau^*$ with 
respect to $L'$ is a smooth $(m-i)$-disk transverse to $f$.%
\footnote{This assumption helps to simplify notation, but one can do without it by considering appropriate 
open neighborhoods of the dual cones.}
If $\sigma$ is an $i$-simplex of $K$ such that $f(\sigma)=\tau$, then its dual cone $\sigma^*$ with respect to 
$K'$ is a smooth $(n-i)$-disk such that $f(\sigma^*)\subset\tau^*$.
Let $I_\sigma$ denote the convex hull of $g(\sigma^*)$ in $\R$.
If $\sigma'$ is another $i$-simplex of $K$ such that $f(\sigma')=\tau$, then 
$I_\sigma\cap I_{\sigma'}=\emptyset$ by Lemma \ref{triangulation}.
The union of all $\sigma^*$ where $\sigma$ is an $i$-simplex of $K$ such that $f(\sigma)=\tau$ coincides 
with $f^{-1}(\tau^*)$ and will be denoted $\tau^*_f$.

Let $N_i$ be the union of all $\tau^*_f$ where $\dim\tau\ge n-i$.
Thus $N_0$ is a finite set and $N_n=N$.
Similarly let $M_i$ be the union of all $\tau^*$ where $\dim\tau\ge n-i$ and let $f_i=f|_{N_i}\:N_i\to M_i$.
Let $g_0=g|_{N_0}$.
Let us assume that $g_{i-1}\:N_{i-1}\to\R$ is a smooth function such that 
$f_{i-1}\x g_{i-1}\:N_{i-1}\to M_{i-1}\x\R$ is a smooth embedding and $\tilde g_{i-1}\:\Delta_{f_{i-1}}\to S^0$ 
coincides with $\tilde g|_{\Delta_{f_{i-1}}}$.

Let us fix an $(n-i)$-simplex $\tau$ of $L$ and an $(n-i)$-simplex $\sigma$ of $K$ such that $f(\sigma)=\tau$.
Let $f_\sigma=f|_{\sigma^*}$ and $g_\sigma=g|_{\sigma^*}$.
By Lemma \ref{morin projection} and Morin's normal form \cite{Mo} $f_\sigma\:\sigma^*\to\tau^*$ is 
$C^\infty$-left-right-equivalent to $F_r\:\R^i\to\R^{m-n+i}$ for some $r\ge 0$.
Let $e_\sigma\:\sigma^*\to I_\sigma$ be a smooth function such that 
$f_\sigma\x e_\sigma\:\sigma^*\to\tau^*\x I_\sigma$
is $C^\infty$-left-right-equivalent to $\Phi_r^\delta\:\R^i\to\R^{m-n+i+1}$, where $\delta$ is chosen so that
the resulting trivialization of $\ker d(f_\sigma)$ coincides with the restriction of $\eps$.
Thus $d(e_\sigma)_x(v)$ is of the same sign as $\eps_x(v)$ for each $x\in\Sigma_{f_\sigma}$ and each 
$v\in\ker d(f_\sigma)_x\but\{0\}$.
Let $\gamma\:\R\to\sigma^*$ be a smooth curve such that $\gamma(0)=x$, $\gamma'(0)=v$ and 
$\big(\gamma(t),\gamma(-t)\big)\in\Delta_{f_\sigma}$ for each $t\in\R$.
Then $\tilde e_\sigma\big(\gamma(t),\gamma(-t)\big)$ is also of the same sign as $d(e_\sigma)_x(v)$.
On the other hand, $\eps_x(v)$ is of the same sign as $\tilde g\big(\gamma(t),\gamma(-t)\big)$.
Since $f_\sigma$ is $C^\infty$-left-right-equivalent to $F_r$ (or alternatively, using a transversality 
theorem of F.~Ronga, see \cite{partII}*{proof of Corollary \ref{II:smooth-delta}}),
$\Delta_{\Sigma_{f_\sigma}}$ is a submanifold of the closure of $\Delta_{f_\sigma}$.
It follows that $\tilde e_\sigma$ and $\tilde g$ coincide on a punctured neighborhood%
\footnote{A {\it punctured neighborhood} of $A$ in $X$ is a set of the form $U\but A$, where $U$ is 
a neighborhood of $A$ in $X$.}
of $\Delta_{\Sigma_{f_\sigma}}$ in the closure of $\Delta_{f_\sigma}$.
In particular, they coincide on all pairs $(x_\eps,y_\eps)\in\Delta_{f_\sigma}$ that are sufficiently close to 
$(\hat\sigma,\hat\sigma)$.
But for any pair $(x,y)\in\Delta_{f_\sigma}$ we have $x=(1-t_0)\hat\sigma+t_0x'$ and $y=(1-t_0)\hat\sigma+t_0y'$ 
for some $x',y'\in\partial\sigma^*$ and some $t_0\in I$.
If $x_t=(1-t)\hat\sigma+tx'$ and $y_t=(1-t)\hat\sigma+ty'$ for some $t\in [\eps,t_0]$, 
then clearly $(x_t,y_t)\in\Delta_{f_\sigma}$.
It follows that $\tilde e_\sigma$ coincides with $\tilde g|_{\Delta_{f_\sigma}}$.

Now $\tilde g$ also coincides with $\tilde g_{i-1}$ on $\Delta_{f|_{\partial\sigma^*}}$.
Therefore by Lemma \ref{homotopy}(b) $e_\sigma|_{\partial\sigma^*}$ is homotopic to $g_{i-1}|_{\partial\sigma^*}$ 
by a homotopy $h_t$ such that $f|_{\partial\sigma^*}\x h_t\:\partial\sigma^*\to\partial\tau^*\x\R$
is a smooth isotopy.
Using $h_t$, it is not hard to construct a smooth function $e'_\sigma\:\sigma^*\to I_\sigma$ which coincides 
with $g_{i-1}$ on $\partial\sigma^*$ and a homotopy $h'_t\:\sigma^*\to I_\sigma$ from $e_\sigma$ to $e'_\sigma$ 
such that $f_\sigma\x h'_t\:\sigma^*\to\tau^*\x\R$ is a smooth isotopy.
The existence of $h'_t$ implies that $\tilde e'_\sigma$ coincides with $\tilde g|_{\Delta_{f_\sigma}}$.
It follows that $g_{i-1}$ extends to a smooth function $g_i\:N_i\to\R$ such that $f_i\x g_i\:N_i\to M_i\x\R$ 
is a smooth embedding and $\tilde g_i\:\Delta_{f_i}\to S^0$ coincides with $\tilde g|_{\Delta_{f_i}}$.

In the end we obtain a smooth function $g_n\:N\to\R$ such that $f\x g_n\:N\to M\x\R$ is a smooth embedding and 
$\tilde g_n\:\Delta_f\to S^0$ coincides with $\tilde g$.
By Lemma \ref{homotopy}(a) $g_n$ is homotopic to $g$ by a homotopy $H_t$ such that $f\x H_t\:N\to M\x\R$ is
an isotopy.
\end{proof}

A simplified version of the proof of Theorem \ref{smoothing}, without references to Lemmas \ref{triangulation} 
and \ref{morin projection}, establishes the following

\begin{theorem}[Gromov, Sz\H ucs] \label{szucs} Let $f\:N^n\to M^m$ be a stable smooth map between smooth manifolds, 
where $n\le m$.
Then $f$ lifts to a smooth immersion $N\to M\x\R$ if and only if $f$ is a corank one map and $\ker df$ 
is trivial as a line bundle over $\Sigma_f$.
\end{theorem}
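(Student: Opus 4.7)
For the ``only if'' direction, the argument is purely linear-algebraic. If $g\:N\to\R$ is smooth with $f\x g$ a smooth immersion, then $df_x\x dg_x\:T_xN\to T_{f(x)}M\x\R$ is injective for each $x\in N$. Because the target $\R$ of $dg_x$ is one-dimensional, this forces $\dim\ker df_x\le 1$, so $f$ is a corank one map. For $x\in\Sigma_f$, the restriction $dg_x|_{\ker df_x}\:\ker df_x\to\R$ must then be an isomorphism, providing a nowhere-zero section of $(\ker df)^*$ over $\Sigma_f$, and hence a trivialization of $\ker df|_{\Sigma_f}$.

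For the ``if'' direction, my plan is to rerun the proof of Theorem \ref{smoothing} with two simplifications. The hypotheses now supply directly the trivialization $\eps$ of $\ker df|_{\Sigma_f}$ that was extracted in that proof via Lemma \ref{morin projection}; and because we only require an immersion rather than an embedding, there is no need to separate different preimage sheets in $\R$, so Lemma \ref{triangulation} becomes unnecessary as well. Invoking Verona's theorem to triangulate $f$ simplicially, I would induct on the skeleton $N_i$ as in Theorem \ref{smoothing}: on each dual cone $\sigma^*$, use Morin's normal form to build a local smooth lift $e_\sigma\:\sigma^*\to\R$ with $f|_{\sigma^*}\x e_\sigma$ $C^\infty$-left-right-equivalent to some $\Phi_r^\delta$, choosing the sign $\delta$ so that $d(e_\sigma)_x$ is positive on $\eps_x$ at each $x\in\Sigma_f\cap\sigma^*$.

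To glue $e_\sigma$ with the previously built immersion lift $g_{i-1}$ across $\partial\sigma^*$, I would appeal to the immersion-only content of Lemma \ref{homotopy}(b): because $\eps$ is a global trivialization, both $e_\sigma$ and $g_{i-1}$ agree in sign on $\ker df$ at every singular point of $\partial\sigma^*$, so their affine interpolation is still an immersion lift by the argument in that lemma. This yields a homotopy through immersion lifts from $e_\sigma|_{\partial\sigma^*}$ to $g_{i-1}|_{\partial\sigma^*}$, which I would extend over $\sigma^*$ to modify $e_\sigma$ into an $e'_\sigma$ that coincides with $g_{i-1}$ on $\partial\sigma^*$ while still lifting $f$ to an immersion. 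Setting $g_i=g_{i-1}\cup\bigcup_\sigma e'_\sigma$ completes the inductive step; in the end $g_n\:N\to\R$ is the desired smooth immersion lift.

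The main technical point, as in Theorem \ref{smoothing}, will be ensuring that the local sign choices $\delta$ made on different dual cones are mutually compatible across their common boundaries. The triviality hypothesis on $\ker df|_{\Sigma_f}$ enters precisely here, supplying the global section $\eps$ that calibrates all the Morin-model lifts simultaneously; without it, adjacent choices could disagree and the convex interpolation could fail to be an immersion at a singular point of $\Sigma_f$.
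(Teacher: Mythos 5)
Your proof is correct and fills in the details of the approach the paper itself only sketches for Theorem~\ref{szucs}: the text merely states that ``a simplified version of the proof of Theorem~\ref{smoothing}, without references to Lemmas~\ref{triangulation} and~\ref{morin projection}, establishes'' the result, and your proposal supplies exactly those details (trivialization $\eps$ given by hypothesis instead of extracted via Lemma~\ref{morin projection}; Lemma~\ref{triangulation} unnecessary since only an immersion is needed; induction over dual cones, calibrating Morin-model lifts by $\eps$, gluing via the immersion content of Lemma~\ref{homotopy}(b)). Note that the paper also records in \S\ref{smoothing-referee} the referee's genuinely different proof avoiding PL topology altogether, which constructs the required function $h$ with $dh>0$ on $\ker df$ by inducting down the stratification $\Sigma^{1_r}\subset\Sigma^{1_{r-1}}\subset\cdots\subset\Sigma^1$ and using a geodesic-flow collar argument (Lemma~\ref{referees-lemma}) in place of the Morin normal form and simplicial machinery.
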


The case $n=2$, $m=\R^2$ was proved by Haefliger \cite{Hae}, the case $n=m$ by Blank and Curley \cite{BC}*{proof of Theorem 1} 
(see also \cite{Mill}, \cite{Lu}), and the case $M=\R^m$ by Gromov \cite{Gr}*{Lemma 2.1.1(A)}.
The case $M=\R^m$ and the general case were routinely stated without proof (``it is easy to see'') in a number of papers by A. Sz\H ucs 
and his collaborators, starting from 1991 \cite{Sz}*{p.\ 344}.
G. Lippner's dissertation, supervised by Sz\H ucs, claims that ``we will later see'' a proof of the general case \cite{Li1}*{p.\ 5}, 
but I could not find it.
The proof is indeed rather easy, but not entirely trivial.

The referee has kindly provided a self-contained proof Theorem \ref{szucs} not involving any PL topology, which is similar to Gromov's proof, 
but much more explicit (see \S\ref{smoothing-referee} below).

\begin{theorem} \label{imm-smoothing} Let $f\:N^n\to M^m$, $n\le m$, be a corank one stable smooth map from 
a compact smooth manifold to a smooth manifold. 
The following are equivalent:
\begin{enumerate}
\item $f$ lifts to a smooth immersion $N\to M\x\R^k$;

\item $f$ lifts to a topological immersion $N\to M\x\R^k$;

\item the line bundle $\ker df$ over $\Sigma_f$ admits a monomorphism to the trivial bundle 
$\Sigma_f\x\R^k\to\Sigma_f$ lying over $\id\:\Sigma_f\to\Sigma_f$.
\end{enumerate}
\end{theorem}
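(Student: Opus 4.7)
The three conditions should be proved equivalent by showing $(1)\Rightarrow(2)$, $(1)\Rightarrow(3)$, $(2)\Rightarrow(3)$, and $(3)\Rightarrow(1)$. The first two are immediate: a smooth immersion is a topological immersion, and if $f\x g\:N\to M\x\R^k$ is a smooth immersion then $dg_x|_{\ker df_x}\:\ker df_x\to\R^k$ is injective at each $x\in\Sigma_f$, furnishing the required monomorphism.

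For $(2)\Rightarrow(3)$ I would adapt the compactification argument used in the proof of Lemma \ref{morin projection}. Given a topological immersion lift $f\x g$, choose a $\Z/2$-invariant neighborhood $U$ of $\Delta_N$ in $N\x N$ on which $g(x)\ne g(y)$ whenever $(x,y)\in\Delta_f\cap U$, and define the $\Z/2$-equivariant map $\tilde g\:\Delta_f\cap U\to S^{k-1}$ by $\tilde g(x,y)=\frac{g(y)-g(x)}{\|g(y)-g(x)\|}$. By Ronga's transversality theorem, the Axelrod--Singer closure $\check\Delta_f\subset\check N$ is a manifold with boundary equal to the unit sphere bundle $\check\Sigma_f$ of $\ker df$ over $\Sigma_f$ (using the corank one hypothesis). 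The same collar-type homotopy equivalence as in Lemma \ref{morin projection} shows that $\tilde g$ extends continuously to a $\Z/2$-equivariant map $\check\Sigma_f\to S^{k-1}$. Radial extension yields a bijection between such equivariant maps and line bundle monomorphisms $\ker df|_{\Sigma_f}\to\Sigma_f\x\R^k$, since an odd positively $1$-homogeneous map $\R\to\R^k$ is automatically linear; this provides the monomorphism of~(3).

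The main work is $(3)\Rightarrow(1)$, for which I plan to rerun the inductive dual-cell construction of the proof of Theorem \ref{smoothing}, with the prescribed monomorphism $\mu\:\ker df\to\Sigma_f\x\R^k$ playing the role of the trivialization $\eps$ there. After choosing smooth triangulations $K$ of $N$ and $L$ of $M$ with $f\:K\to L$ simplicial (Verona's theorem) and applying Lemma \ref{triangulation}, one builds the lift inductively on dual skeleta $N_0\subset N_1\subset\dots\subset N_n=N$. On each dual cone $\sigma^*$ passing through a Morin point, the normal form $F_r$ provides a standard smooth immersion lift $\sigma^*\to\tau^*\x\R^k$ whose fiberwise derivative along $\ker d(f|_{\sigma^*})$ is prescribed to agree with $\mu$ up to positive scaling. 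To glue these local lifts along $\partial\sigma^*$ one applies the codimension-$k$ analogue of Lemma \ref{homotopy}(b): two smooth immersion lifts of $f$ whose associated equivariant maps $\Delta_f\to S^{k-1}$ are equivariantly homotopic, and whose fiberwise derivatives at $\Sigma_f$ are homotopic through monomorphisms, are smoothly isotopic through immersion lifts of $f$.

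The main obstacle I anticipate is precisely this gluing step for $k\ge 2$. In the $k=1$ case the monomorphism reduces to a sign on each fiber and the matching condition at $\partial\sigma^*$ is discrete; by contrast, for $k\ge 2$ the fiber space of monomorphisms $\R\to\R^k$ is the connected manifold $\R^k\but\{0\}$, so adjacent local Morin lifts will in general disagree and must be interpolated through an equivariant path of monomorphisms. The content of hypothesis (3) is exactly that such an interpolation exists coherently over all of $\Sigma_f$, as the data of $\mu$ amounts to a single global section of the bundle $\Hom(\ker df,\R^k)\but 0\to\Sigma_f$. Away from $\Sigma_f$ the map $f$ is already an immersion, so extending the lift over the remaining cells is routine. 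A secondary subtlety is that when $\Sigma_f$ meets lower-dimensional strata $\Sigma^{1_r,0}_f$ with $r\ge 2$, one must check compatibility between the Morin local lift, the monomorphism $\mu$ at the stratum, and the inductively constructed lift on the boundary; this is handled as in the proof of Theorem \ref{smoothing} via the $C^\infty$-left-right-equivalence with $F_r$.
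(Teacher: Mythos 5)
Your overall framework is sound and the easy implications $(1)\Rightarrow(2)$, $(1)\Rightarrow(3)$, $(2)\Rightarrow(3)$ are handled correctly; in particular your observation that in the corank one case an odd, positively $1$-homogeneous map $\R\to\R^k$ is automatically linear is a nice way to make explicit the "the general case is similar" that the paper asserts for $(2)\Rightarrow(3)$. You have also correctly located the crux: the gluing step for $(3)\Rightarrow(1)$ in codimension $k\ge2$. However, your proposed resolution of it does not work as stated, and it is precisely here that the paper introduces a device you are missing.

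The paper does not attempt to build the $\R^k$-valued lift directly on dual cones. Instead it first restricts to a regular neighborhood $S$ of $\Sigma_f$ (off which $f$ is already an immersion), extends $\ker df$ to a line bundle $\Lambda$ over $S$, and then factors $f$ through a decomposition space $N\xr{\phi}N_f\xr{\psi}M$, with $\phi$ a homotopy equivalence. Writing $T=\phi(S)$, one gets a line bundle $\lambda$ over $T$ with $\Lambda\simeq\phi^*\lambda$; hypothesis $(3)$ is used exactly once, to produce a \emph{bundle} monomorphism $\xi\:\lambda\to T\x\R^k$. Then the dual-cone induction lifts $\phi|_S$ into the total space $E(\lambda)$ of a \emph{line} bundle, so the inductive invariant ($d\chi_i$ restricted to $\Lambda$ agrees with the fixed isomorphism $\eps$) is a rigid codimension-one condition, and the linear homotopy between $e_\sigma$ and $\chi_i$ on $\partial\sigma^*$ is automatically a regular homotopy because the two differentials actually \emph{coincide} on $\Lambda|_{\partial\sigma^*}$. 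Composing $\chi\:S\to E(\lambda)$ with $\xi$ and $\psi\x\id$ then yields the desired immersion $S\to M\x\R^k$. This reduction to the line-bundle case is what makes the gluing trivial, and it is absent from your proposal.

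Your substitute — a "codimension-$k$ analogue of Lemma \ref{homotopy}(b)" asserting that two immersion lifts are smoothly isotopic if their $\tilde g\:\Delta_f\to S^{k-1}$ are equivariantly homotopic and their fiberwise derivatives at $\Sigma_f$ are homotopic through monomorphisms — is not a correct statement and would not serve your purpose even if it were. In Lemma \ref{homotopy} the $S^0$-valued maps must \emph{coincide}, not merely be homotopic; the argument is the linear homotopy, which is a regular (resp.\ isotopic) homotopy only when the derivative directions are pointwise equal (or at least pointwise non-antipodal) on $\Sigma_f$. Homotopy through monomorphisms is far too weak to guarantee that the linear interpolation stays an immersion. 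Moreover, you have not shown that a Morin-normal-form local lift on $\sigma^*$ can be arranged to have fiberwise derivative along $\ker df$ proportional to a \emph{prescribed, varying} $\mu$ while remaining an immersion lift of $f|_{\sigma^*}$; the normal form $\Phi_r^\pm$ lives in codimension one, and turning it into a codimension-$k$ lift requires composing with a fiberwise monomorphism $\R\to\R^k$ — at which point you are implicitly trying to reconstruct exactly the coherent bundle map $\xi$ that the paper builds once and for all over $T$. So the gap is real: the hard content of $(3)\Rightarrow(1)$ is the coherent choice of that composition, and the $N_f$/$E(\lambda)$/$\xi$ factorization is what supplies it.
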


As one could expect, it is not hard to prove Theorem \ref{imm-smoothing} by elaborating on Gromov's proof of the Smale--Hirsch 
$h$-principle \cite{Gr}*{2.1.1(C)}.
The author learned this from the referee, who kindly provided some details of a proof along these lines 
(see \S\ref{smoothing-referee} below).

The author's original proof of Theorem \ref{imm-smoothing} is in the spirit of the proof of Theorem \ref{smoothing} 
(and so involves PL topology).
It appeared in attacking Problem \ref{prob}, and could be of more interest than Gromov's argument in 
the context of this problem.

\begin{proof} Clearly (1) implies (2).
The implication (2)$\Rightarrow$(3) is proved in Lemma \ref{morin projection} in the case $k=1$, and
the general case is similar.
It remains to prove (3)$\Rightarrow$(1).

By a theorem of A. Verona \cite{Ve} (see also \cite{partII}*{Theorem \ref{II:Shiota}}) 
we may assume that $f$ is simplicial in some smooth triangulations $K$ and $L$ of $N$ and $M$.
Then $\Sigma_f$ is triangulated by a subcomplex of $K$; by passing to barycentric subdivisions if necessary 
we may assume that $\Sigma_f$ is triangulated by a full subcomplex of $K$.
Let $K'$ and $L'$ be the barycentric subdivisions of $K$ and $L$.
If $\sigma$ is a simplex of $K$ and $\tau$ is a simplex of $L$, we write $\sigma^*$, $\tau^*$ for their
dual cones with respect to $K'$ and $L'$.
Like in the proof of Theorem \ref{smoothing}, we may assume that these are smooth disks.
Since $\Sigma_f$ is triangulated by a full subcomplex of $K$, the union $S$ of all $f(v)^*$ such that 
$v\in\Sigma_f$ is a regular neighborhood of $\Sigma_f$ in $N$.
Since $S$ deformation retracts onto $\Sigma_f$, the line bundle $\ker df$ over $\Sigma_f$ uniquely
extends to a line bundle $\Lambda$ over $S$.
Since $f$ immerses $N\but S$, it suffices to construct a lift of $f|_S$ to an immersion $S\to M\x\R^k$.

Let us define a polyhedron $N_f$ as follows.
We start from the disjoint union $\bigsqcup_v f(v)^*$ of the dual cones $f(v)^*$ corresponding 
to each vertex $v$ of $K$.
(Thus if $f(u)=f(v)$ but $u\ne v$, then $f(u)^*$ and $f(v)^*$ are homeomorphic but distinct subsets 
of the disjoint union.)
Corresponding to each simplex $\sigma$ of $K$ we identify the copies of $f(\sigma)^*$ in all the dual cones
$f(v)^*$, where $v$ is a vertex of $\sigma$.
The map $f$ factors in the obvious way into a composition $N\xr{\phi}N_f\xr{\psi}M$, where $\phi$ is 
a homotopy equivalence.
Clearly, $\phi$ restricts to a homotopy equivalence between $S$ and $T:=\phi(S)$.
Hence there is a line bundle $\lambda$ over $T$ such that $\Lambda\simeq \phi^*(\lambda)$; let us fix 
an isomorphism $\eps\:\Lambda\simeq\phi^*(\lambda)$.
Moreover, $\lambda$ admits a monomorphism $\xi$ into the trivial bundle $T\x\R^k\to T$ lying over $\id\:T\to T$.

Next we construct a lift of $\phi|_S$ to an immersion $\chi$ of $S$ into the total space $E(\lambda)$ similarly 
to the proof of Theorem \ref{szucs}.
In more detail, let $S_i$ be union of $\sigma^*$ for all simplexes $\sigma$ of $K$ such that 
$\sigma\subset\Sigma_f$ and $\dim\sigma\ge n-i$.
Thus $S_0=\emptyset$ and $S_n=S$.
Suppose that $\phi|_{S_i}$ lifts to an immersion $\chi_i\:S_i\to E(\lambda)$ such that $d\chi_i$ restricted to 
$\Lambda|_{S_i}$ agrees with $\eps$.
Let $\sigma$ be an $(n-i-1)$-simplex of $K$ contained in $\Sigma_f$.
Like in the proof of Theorem \ref{smoothing}, $f|_{\sigma^*}$ lifts to an embedding 
$e_\sigma\:\sigma^*\to f(\sigma)^*\x\R\subset E(\lambda)$ such that $de_\sigma$ restricted to 
$\Lambda|_{\sigma^*}$ agrees with $\eps$.
Moreover, the linear homotopy between the restrictions of $e_\sigma$ and $\chi_i$ to $\partial\sigma^*$ is 
a regular homotopy since the differentials of both restricted to $\Lambda|_{\partial\sigma^*}$ agree with
$\eps$ and hence with each other.
This yields an extension of $\chi_i$ to an immersion $\chi_{i+1}\:S_{i+1}\to E(\lambda)$ lifting $\phi|_{S_{i+1}}$
and such that $d\chi_{i+1}$ restricted to $\Lambda|_{S_{i+1}}$ agrees with $\eps$.
In the end we obtain an immersion $\chi\:S\to E(\lambda)$ lifting $\phi|_S$, and it is clear from its 
construction that the composition $S\xr{\chi}E(\lambda)\xr{\xi}T\x\R^k\xr{\psi\x\id_{\R^k}}M\x\R^k$ is 
a smooth immersion which lifts $f|_S$.
\end{proof}

\section{Referee's alternative proofs using smooth topology} \label{smoothing-referee}

This section contains proofs of Theorems \ref{szucs} and \ref{imm-smoothing} kindly provided by the referee
(with a few minor corrections and clarifications by the author of the paper).

\begin{proof}[Proof of Theorem \ref{szucs}]
If the given map $f\:N\to M$ lifts to a smooth immersion $N\to M\x\R$, then clearly $f$ is a corank one map and $\ker df$ is trivial 
as a line bundle over $\Sigma_f$ (see also Lemma \ref{morin projection}). 
It remains to prove the converse.
Thus we suppose that $f$ is a corank one map and $\ker df$ is trivial.
Let us note that lifts of $f$ to smooth immersions $N\to M\x\R$ correspond to smooth functions $h\:N\to\R$ whose derivative 
in the direction of $\ker df$ is nowhere zero.

\begin{lemma} \label{referees-lemma}
Let $\Sigma$ be a closed submanifold of the closed manifold $N$ and let $v$ be a smooth vector field along $\Sigma$ 
(that is, $v$ is a section of the bundle $TN|_{\Sigma}$); its value at $p\in\Sigma$ will be denoted by $v_p$. 

(a) Suppose that $v$ is nowhere tangent to $\Sigma$. 
Then there exists a smooth function $h\:N\to\R$ such that $dh(v_p)>0$ for all $p\in\Sigma$.

(b) Under the same hypothesis the values of $h$ on $\Sigma$ can be prescribed in advance; that is, for any smooth function 
$\phi\:\Sigma\to\R$ the function $h$ can be chosen to satisfy $h|_\Sigma=\phi$.

(c) Suppose that $v$ is nowhere vanishing.
If a smooth function $g\:N\to\R$ satisfies $dg(v_p)>0$ on a compact set $C\subset N$ that contains all the points of tangency of $v$ to $\Sigma$, 
then $g|_C$ extends to a smooth function $h\:N\to\R$ such that $dh(v_p)>0$ for all $p\in\Sigma$.
\end{lemma}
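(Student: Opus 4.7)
The plan is to fix an auxiliary Riemannian metric on $N$ and use the resulting orthogonal splitting $TN|_\Sigma=T\Sigma\oplus N\Sigma$ together with a tubular neighborhood $\exp\colon U'\to U$ of $\Sigma$ in $N$. Write $v=v^T+v^\perp$ for the induced decomposition; under the hypothesis of (a), $v^\perp$ is a nowhere-vanishing section of the normal bundle $N\Sigma$. A point $q\in U$ will be parametrized as $q=\exp_{\pi(q)}(w(q))$ with $\pi(q)\in\Sigma$ and $w(q)\in N_{\pi(q)}\Sigma$ small.

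For (a) I would set $h(q):=\chi(q)\,\langle w(q),\,v^\perp_{\pi(q)}\rangle$ on $U$ and extend by zero, where $\chi\colon N\to[0,1]$ is a smooth bump supported in $U$ and identically $1$ on a smaller tubular neighborhood of $\Sigma$. Since $w(p)=0$ for $p\in\Sigma$, the product vanishes on $\Sigma$ and a chain-rule calculation reduces $dh_p(v_p)$ to $\langle v^\perp_p,v^\perp_p\rangle=|v^\perp_p|^2>0$. Part (b) uses the same ansatz with a zero-order correction, $h(q):=\chi(q)\bigl(\phi(\pi(q))+K\langle w(q),v^\perp_{\pi(q)}\rangle\bigr)$, giving $h|_\Sigma=\phi$ and $dh_p(v_p)=d\phi_p(v^T_p)+K|v^\perp_p|^2$; since $\Sigma$ is compact and $|v^\perp|^2$ has a positive lower bound there, a sufficiently large constant $K$ makes the right-hand side positive uniformly.

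For (c) the idea is to keep $g$ and add a correction $\psi$ of the type used in (b), supported away from $C$. By continuity of $dg(v)$ along $\Sigma$, the hypothesis yields an open neighborhood $U_0$ of $C$ in $N$ with $dg(v_p)>0$ on $U_0\cap\Sigma$. I would choose open neighborhoods $V,V'$ of $C$ with $\overline V\subset V'$ and $\overline{V'}\subset U_0$, a bump function $\rho\colon N\to[0,1]$ with $\rho\equiv 0$ on $V$ and $\rho\equiv 1$ outside $V'$, and set $\psi(q):=\chi(q)\rho(q)K\langle w(q),v^\perp_{\pi(q)}\rangle$ and $h:=g+\psi$. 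Then $\psi$ vanishes on $C$ (because $\rho$ kills it in $U\cap V$ and $\chi$ kills anything outside $U$), so $h|_C=g|_C$, while the same computation as before yields $dh_p(v_p)=dg_p(v_p)+\rho(p)K|v^\perp_p|^2$ for all $p\in\Sigma$.

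The main obstacle is verifying that this quantity is strictly positive on all of $\Sigma$. On $V'\cap\Sigma$ positivity comes from $dg(v_p)>0$ (since $V'\subset U_0$) together with the correction being non-negative. On the compact set $\Sigma\setminus V'$, the vector $v$ is nowhere tangent to $\Sigma$ (the tangency set lies in $C\subset V$), so $|v^\perp|^2$ is bounded below by some $\eps>0$; since $\rho\equiv 1$ there and $dg(v)$ is bounded below by compactness, any $K$ exceeding the ratio of these bounds makes $dh(v)$ strictly positive uniformly, completing the argument.
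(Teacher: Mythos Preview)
Your proof is correct and takes a genuinely different route from the paper's. The paper flows along $v$ itself: with a Riemannian metric it takes geodesics $\gamma_{v_p}(t)$ with initial velocity $v_p$, so that for small $\eps$ the map $(p,t)\mapsto\gamma_{v_p}(t)$ embeds $\Sigma\times(-\eps,\eps)$ into $N$, and then simply sets $\tilde h\big(\gamma_{v_p}(t)\big)=t$ for (a), respectively $t+\phi(p)$ for (b). You instead use the \emph{orthogonal} tubular neighborhood and the scalar $\langle w,v^\perp\rangle$.

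The trade-offs are as follows. The paper's flow-along-$v$ construction makes (b) free: moving in the $v$-direction keeps the base parameter $p$ fixed, so $\phi(p)$ contributes nothing to $dh(v_p)$ and no large constant is needed. Your normal projection moves along $v^T$ in the base, producing the extra term $d\phi(v^T)$ and forcing the constant $K$. On the other hand, in (c) the paper must manufacture a weight function that additionally satisfies $d\phi(v_p)=0$ on $\Sigma$ (otherwise the product rule would introduce an uncontrolled $\beta\,d\phi(v_p)\,h_0(p)$ term), and this costs a careful construction with several nested neighborhoods and a separate extension argument. In your version the factor $\langle w,v^\perp_\pi\rangle$ already vanishes on $\Sigma$, so the product rule kills the $d\rho$ and $d\chi$ contributions automatically and any cutoff $\rho$ works; this makes your (c) noticeably cleaner. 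Both arguments are standard tubular-neighborhood constructions, just calibrated to different transverse directions.
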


\begin{proof}[Proof. (a)] Fix a complete Riemannian metric on $N$, and denote by $\gamma_{v_p}(t)$ (where $p\in N$, $t\in\R$) 
the geodesic through $\gamma_{v_p}(0)=p$ that has derivative $v_p$ at $t=0$. 
For a sufficiently small number $\eps>0$ the smooth map $\psi\:\Sigma\x(-\eps,\eps)\to N$ defined by \[\psi(p,t)=\gamma_{v_p}(t)\] 
is a smooth embedding. 
The function $\tilde h\:\psi(p,t)\mapsto t$ is a smooth function on the image of $\psi$ such that $d\tilde h(v)>0$ holds everywhere. 
Finally, $\tilde h$ restricted to $\Sigma\x[-\frac\eps2,\frac\eps2]$ extends to a smooth function $h\:N\to\R$.
\end{proof}

\begin{proof}[(b)] The same proof as in part (a) works with the modified definition $\tilde h\:\psi(p,t)\mapsto t+\phi(p)$.
\end{proof}

\begin{proof}[(c)] Extend $v$ to a nowhere vanishing vector field in a neighbourhood of $\Sigma$. 
Since $dg(v_p)>0$ is an open condition, it holds on an open neighbourhood $U$ of $C$ in $N$. 
Pick smaller open neighbourhoods $V$, $W$, $X$ and $Y$ of $C$ such that $\Cl Y\subset X\subset\Cl X\subset W\subset\Cl W\subset V\subset\Cl V\subset U$.
We can now repeat the construction in the proof of (a) away from $C$: the function $\psi(p,t)=\gamma_{v_p}(t)$ defined there is an embedding of 
$(\Sigma\but Y)\x(-\eps,\eps)$, which is smooth on $(\Sigma\but\Cl Y)\x(-\eps,\eps)$, and we can define $\tilde h_0\big(\psi(p,t)\big)=t$ 
on its image and extend $\tilde h_0$ restricted to $(\Sigma\but X)\x[-\frac\eps2,\frac\eps2]$ to a smooth function $h_0\:N\to\R$ such that 
$dh_0(v_p)=1$ for all $p\in\Sigma\but X$.

If $\eps$ is greater than the distance between $\Cl X$ and $N\but W$ or the distance between $\Cl V$ and $N\but U$, then decrease $\eps$ 
until this is no longer the case. 
We define a smooth weight function $\phi\:N\to[0,1]$ that satisfies
\begin{itemize}
\item $\phi|_{N\but U}=1$,
\item $\phi|_{\Cl X}=0$ and
\item $d\phi(v_p)=0$ for all $p\in\Sigma$.
\end{itemize}

To do so, we first define $\phi$ on $\Sigma$ to satisfy $\phi|_{\Sigma\but V}=1$ and $\phi|_{\Sigma\cap\Cl W}=0$, 
then extend it to be constant on the curves $\gamma_{v_p}(-\eps,\eps)$ for all $p\in\Sigma$. 
Since no such curve starting from a point $p\in\Sigma\but W$ intersects $\Cl X$ and no curve starting from $p\in\Sigma\cap\Cl V$ 
intersects $N\but U$, we can finally extend $\phi$ to the rest of $N$ in a way that respects the first two conditions.

Define now $h=\beta\phi h_0+g$ for a large constant $\beta\in\R$ to be chosen later.
The directional derivative that we want to make positive for all $p\in\Sigma$ is
\[dh(v_p)=\beta d(\phi h_{0})(v_p)+dg(v_p)=\beta\phi dh_{0}(v_p)+dg(v_p).\]
For $p\in\Sigma\cap X$ this expression is $dg(v_p)>0$. 
For $p\in(\Sigma\cap U)\but X$ the second term is positive and the first term is nonnegative. 
Finally, for $p$ in the remaining compact set $\Sigma\but U$ the values $dg(v_p)$ may be negative, but for some $\beta>0$ we have $ dg(v_p)>-\beta$ 
for all $p\in\Sigma\but U$, and in this case $dh(v_p)=\beta+dg(v_p)>0$ as well. 
\end{proof}

Coming back to the proof of the theorem, let $\Sigma^{1_r}\subset\Sigma^{1_{r-1}}\subset\cdots\subset\Sigma^1=\Sigma_f$ be
the singularity strata of $f$ ($\Sigma^{1_r}$ being an abbreviation for $\Sigma^{\everymath{\scriptstyle}\overbrace{\scriptstyle 1,\dots,1}^r}$), 
with $\Sigma^{1_{r+1}}=\emptyset$.
Let $v$ be a vector field along $\Sigma^1$ directed in the positive direction of $\ker df$ according to its given trivialization. 
Since $\Sigma^{1_{r+1}}$ is empty, $v$ is nowhere tangent to $\Sigma^{1_r}$. 
Hence by Lemma \ref{referees-lemma}(a) we can define a smooth function $h_r\:N\to\R$ such that $dh_r(v_p)>0$ for all $p\in\Sigma^{1_r}$.
Note that $v$ is not tangent to $\Sigma^{1_{r-1}}$ at all points of $\Sigma^{1_{r-1}}\but \Sigma^{1_{r}}$. 
Hence by Lemma \ref{referees-lemma}(c) applied to $C=\Sigma^{1_r}$ and $\Sigma=\Sigma^{1_{r-1}}$, the restriction of $h_r$ to $\Sigma^{1_r}$ 
extends to a smooth function $h_{r-1}\:N\to\R$ such that $dh_{r-1}(v_p)>0$ for all $p\in\Sigma^{1_{r-1}}$.
Iterating this procedure, we arrive at a smooth function $h_1\:N\to\R$ such that $dh_1(v_p)>0$ for all $p\in\Sigma^1=\Sigma_f$. 
Then $(f,h_1)\:N\to M\x \R$ is a smooth immersion which lifts $f$. 
\end{proof}

\begin{proof}[Proof of Theorem \ref{imm-smoothing}, (3)$\Rightarrow$(1)] 
First we construct a bundle monomorphism $TN\to T(M\x\R^k)$. 
Using an arbitrarily chosen Euclidean metric on the bundle $TN$ we can decompose $TN|_\Sigma$ into the direct sum $\ker df\oplus(TN/\ker df)$. 
Let $\pi$ denote the corresponding projection of $TN|_\Sigma$ onto $\ker df$. 
Composing $\pi$ with the given monomorphism $\phi\:\ker df\to\Sigma\x\R^k$ we obtain a bundle morphism $\tilde\alpha\:TN|_\Sigma\to\Sigma\x\R^k$.
Consider the bundle $\Hom(TN,\eps^k)$; its sections are bundle morphisms from $TN$ to the trivial bundle 
$\eps^k=N\x\R^k$ lying over the identity map of $N$. 
The morphism $\tilde\alpha$ can be considered as a partial section of $\Hom(TN,\eps^k)$ defined over $\Sigma$. 
Such a partial section can be extended to a global section (since the fiber is contractible), which yields a morphism $\alpha\:TN\to\eps^k$ 
that coincides with $\tilde\alpha$ over $\Sigma$. 
Taking $\alpha\oplus df$ gives us a fibrewise monomorphism $\Phi\:TN\to T(M\x\R^k)$ lying over $(f,0)\:N\to M\x\R^k$.

The compositions $\omega_i$ of $\alpha$ with the projections $\pi_1,\dots,\pi_k\:\eps^k\to\eps$ onto the $k$ factors are differential $1$-forms on $N$,
which yield a decomposition $\Phi=(df,\omega_1,\dots,\omega_k)$.
Our aim is to replace these $1$-forms by exact $1$-forms $dh_i$, $i=1,\dots,k$, so that $(df,dh_1,\dots,dh_k)$ is still a fibrewise monomorphism. 
If we achieve this, then $(f,h_1,\dots,h_k)\:N\to M\x \R^k$ will be a smooth immersion that lifts $f$.

We now replace $\omega_i$ by $dh_i$ one by one for $i=1,\dots,k$ (compare \cite{Gr}*{proof of 2.1.1(C)}).
First we temporarily omit $\omega_1$. 
The remaining morphism $\Phi_1=(df,\omega_2,\dots,\omega_k)\:TN\to T(M\x \R^{k-1})$ may have a $1$-dimensional kernel on a subset $\Sigma$. 
By the Boardman theory, if $\Phi_1$ is generic, then $\Sigma$ will be a smooth manifold and it will be stratified similarly to the singularity set of 
a stable cornak one map by submanifolds $\Sigma^{1_r}\subset\Sigma^{1_{r-1}}\subset\ldots\subset\Sigma^1=\Sigma$ so that $\ker\Phi_1$ is nowhere 
tangent to $\Sigma^{1_r}$ and for any $p\in\Sigma^{1_i}$ the direction of $\ker\Phi_1$ at $p$ is tangent to $\Sigma^{1_i}$ if and only if 
$p\in\Sigma^{1_{i+1}}$. 
Applying the same argument as in the proof of Theorem \ref{szucs} we construct a function $h_1\:N\to\R$ such that $dh_1$ does not vanish on 
$\ker\Phi_1$ anywhere. 
This is equivalent to $(\Phi_1,dh_1)\:TN\to T(M\x \R^k)$ being a fibrewise monomorphism, so we replace $\omega_1$ with $dh_1$.

Next we temporarily omit $\omega_2$ and consider the remaining bundle morphism $\Phi_2= (df,dh_1,\omega_3,\dots,\omega_k)\:TN\to T(M\x\R^k)$. 
Repeating the previous argument yields a function $h_2$ such that $(df,dh_1,dh_2,\omega_3,\dots,\omega_k)$ is a fibrewise monomorphism, 
and we repeat this step for all $i$ until all the $\omega_i$ are replaced, which gives us the desired smooth immersion that lifts $f$. 
\end{proof}

\section{A visualization of embedded lifts of Morin's normal form}

In conclusion, we discuss the geometry of Morin's normal form $F_r\:\R^n\to\R^m$, $n\le m$, of its embedded lifts 
$\Phi_r^\pm\:\R^n\to\R^{m+1}$ (see \S\ref{smoothing-sec}), and of the space of all its embedded lifts.
The main result of this discussion, Proposition \ref{morin-lift}, has already been proved in \S\ref{smoothing-sec} 
in an easier way.
However, the more explicit proof given below might be useful elsewhere, for instance, in attacking 
Problem \ref{prob}.

Let $T_r$ be the Chebyshov polynomial of the first kind.
It is a degree $r$ polynomial, which is even when $r$ is even and odd when $r$ is odd.
As a map $\R\to\R$, it coincides on $[-1,1]$ with the composition
$[-1,1]\xr{\text{Re}^{-1}_+}S^1\xr{n}S^1\xr{\text{Re}}[-1,1]$ and on $[1,\infty)$ with the composition of 
homeomorphisms $[1,\infty)\xr{\cosh^{-1}}[0,\infty)\xr{n}[0,\infty)\xr{\cosh}[1,\infty)$.

The map $T_r\:\R\to\R$ has two obvious lifts $\Gamma_r^\pm\:\R\to\R^2$, defined by 
$\Gamma_r^\pm(x)=\big(T_r(x),\pm x\big)$, which are smooth embeddings.

\begin{lemma} \label{chebyshov-lift}
The space $C^0(T_r)$ of topological embeddings $\R\to\R^2$ that lift $T_r$ consists of two contractible
path components, one containing $\Gamma_r^+$ and another $\Gamma_r^-$.
\end{lemma}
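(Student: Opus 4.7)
The plan is to combine Lemma \ref{homotopy}(a) with a combinatorial analysis of how the $g$-values must be ordered at the preimages of an interior point. Writing each $e\in C^0(T_r)$ as $T_r\x g$ with $g\:\R\to\R$, Lemma \ref{homotopy}(a) applies (since $T_r$ has finite fibres) and states that two such lifts lie in the same path component of $C^0(T_r)$ if and only if their induced equivariant maps $\tilde g,\tilde g'\:\Delta_{T_r}\to S^0$ coincide, the isotopy being realized by the linear interpolation $g_t=(1-t)g+tg'$. Since this interpolation stays inside the component throughout, each component is star-shaped in the function space $C^0(\R,\R)$ at any of its points and hence contractible. It remains to show that exactly two equivariance classes of $\tilde g$ are realized.

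For $y\in(-1,1)$ the equation $T_r(x)=y$ has exactly $r$ real solutions, all in $[-1,1]$, which I label $p_0(y)>p_1(y)>\dots>p_{r-1}(y)$ and which depend continuously on $y$. For any embedded lift $g$ the values $g(p_0(y)),\dots,g(p_{r-1}(y))$ are pairwise distinct, so their ordering defines a permutation $\sigma\in S_r$ by $g(p_{\sigma(0)}(y))>\dots>g(p_{\sigma(r-1)}(y))$; local constancy in $y$ and connectedness of $(-1,1)$ make $\sigma$ independent of $y$. The key combinatorial claim is that $\sigma$ is the identity or the full reversal.

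To prove this I would examine the coalescence of preimages at the critical values $\pm 1$. As $y\to 1^-$, for each relevant $s\ge 1$ both $p_{2s-1}(y)$ and $p_{2s}(y)$ tend to the same critical point $c_s=\cos(2s\pi/r)$, so by continuity the values $g(p_{2s-1}(y))$ and $g(p_{2s}(y))$ tend to the common value $g(c_s)$; meanwhile the remaining limits $g(p_j(y))$ for $j\ne 2s-1,2s$ are bounded away from $g(c_s)$, because distinct elements of $T_r^{-1}(1)$ must receive distinct $g$-values by injectivity of $T_r\x g$. Consequently the positions $\sigma^{-1}(2s-1)$ and $\sigma^{-1}(2s)$ must be adjacent in $\{0,\dots,r-1\}$, as otherwise some $g(p_j(y))$ with $j\ne 2s-1,2s$ would be sandwiched between $g(p_{2s-1}(y))$ and $g(p_{2s}(y))$ for $y$ close to $1$ and thus forced to have $g(c_s)$ as a limit, contradicting distinctness. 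The symmetric analysis at $y\to -1^+$, where the coalescing pairs are $(p_{2s},p_{2s+1})$, forces $\sigma^{-1}(2s)$ and $\sigma^{-1}(2s+1)$ to be adjacent as well. Combining both series of constraints yields $|\sigma^{-1}(j+1)-\sigma^{-1}(j)|=1$ for every $j\in\{0,\dots,r-2\}$, so $\sigma^{-1}$ is a $\pm 1$-step bijection of $\{0,\dots,r-1\}$, which is necessarily monotone; hence $\sigma$ is the identity or the full reversal. When $r$ is even, the tail preimages for $|y|>1$ introduce no new constraint because they extend continuously to the boundary pair $(1,-1)=(p_0(1),p_{r-1}(1))$, where the sign of $g(1)-g(-1)$ is already determined by $\sigma$. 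Since $\Gamma_r^+$ (with $g(x)=x$) realizes the identity permutation and $\Gamma_r^-$ the reversal, exactly two equivariance classes occur, giving the two contractible components asserted.

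The main obstacle is the combinatorial step forcing $\sigma$ to be identity or reversal: one must use injectivity of $T_r\x g$ on $T_r^{-1}(1)$ to ensure that no $p_j$ with $j\ne 2s-1,2s$ can hide between a coalescing pair $(p_{2s-1},p_{2s})$ as $y\to 1^-$; the rest is a formal unpacking of Lemma \ref{homotopy}(a).
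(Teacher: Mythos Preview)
Your proof is correct and follows the same overall strategy as the paper's: classify lifts by a combinatorial invariant taking two values, then use the linear homotopy (as in Lemma \ref{homotopy}(a)) to show each class is convex in $C^0(\R,\R)$, hence contractible.

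The combinatorial step is organized differently. The paper records the ordering of $g$-values at the local maxima $m_i^+$ (all lying in the fiber over $1$) and at the local minima $m_i^-$ (over $-1$), and asserts with an ``it is easy to see'' that both orderings must be simultaneously increasing or simultaneously decreasing; from this it declares the straight-line homotopy to $\Gamma_r^\pm$ to be ``clearly an isotopy''. You instead track the full permutation $\sigma$ on a generic fiber over $y\in(-1,1)$ and pin it down by a coalescence argument at $y\to\pm 1$: the adjacency constraints $|\sigma^{-1}(j{+}1)-\sigma^{-1}(j)|=1$ force $\sigma$ to be the identity or the reversal. Your route makes explicit exactly what the paper's two ``clearly/easy'' steps are hiding---namely, that the invariant really is $\tilde g$ and that injectivity of $g$ on $T_r^{-1}(\pm 1)$ is what rules out any non-monotone $\sigma$---at the cost of a slightly longer argument. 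Invoking Lemma \ref{homotopy}(a) up front also cleanly separates the contractibility from the classification, whereas the paper re-derives the linear isotopy inline.
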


\begin{proof} Let $m^+_1<m^+_2<\dots$ be the maxima of $T_r$ and $m_1^-<m_2^-<\dots$ be its minima.
Then each $T_r(m_i^+)=1$ and each $T_r(m_i^-)=-1$.
Let $g\:\R\to\R$ be a map such that $T_r\x g\:\R\to\R^2$ is an embedding.
It is easy to see that either
\begin{itemize}
\item $g(m_1^+)<g(m_2^+)<g(m_3^+)<\dots$ and $g(m_1^-)<g(m_2^-)<g(m_3^-)<\dots$; or
\item $g(m_1^+)>g(m_2^+)>g(m_3^+)>\dots$ and $g(m_1^-)>g(m_2^-)>g(m_3^-)>\dots$.
\end{itemize}
It follows that $\Delta:=T_r\x g$ is isotopic through lifts of $T_r$ either to $\Gamma_r^+$ (in the first case)
or to $\Gamma_r^-$ (in the second case).
Namely, the linear homotopy $h_t\:\R\to\R^2$, defined by $h_t(x)=(1-t)\Delta(x)+t\Gamma_r^\eps(x)$,
is clearly an isotopy through lifts of $T_r$.
But this $h_t$ continuously depends on $\Delta$.
\end{proof}

In the case $m=n=r$ the Morin normal form $F_r\:\R^n\to\R^m$ specializes to the map $f_r\:\R^r\to\R^r$, defined 
by $f_r(t_1,\dots,t_{r-1},x)=(t_1,\dots,t_{r-1},\,t_1x+\dots+t_{r-1}x^{r-1}+x^{r+1})$.
In general, $f_r$ can be identified with the restriction of $F_r$ to the plane $t_r=\dots=t_{n-1}=0$.

\begin{lemma} \label{swallowtail} Every component of $\Delta_{F_r}$ contains a point of $\Delta_{f_r}$. 
In fact, $\Delta_{F_r}$ is homeomorphic to $\Delta_{f_r}\x\R^{2n-m-r}$ extending the identification between 
$\Delta_{f_r}$ and $\Delta_{f_r}\x 0$.
\end{lemma}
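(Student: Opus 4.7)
The plan is to unwind $\Delta_{F_r}$ as a very explicit semialgebraic set: write out the $m-n+1$ coincidence equations $F_r(t,x)=F_r(t,x')$, and observe that all but one of them can be solved uniquely for a single ``distinguished'' variable, reducing $\Delta_{F_r}$ to a product of $\Delta_{f_r}$ (defined by the remaining equation) and a Euclidean space of free parameters.

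First I would observe that a pair $\big((t_1,\dots,t_{n-1},x),(t_1,\dots,t_{n-1},x')\big)\in\tilde{\R^n}$ lies in $\Delta_{F_r}$ iff $x\ne x'$ and the $m-n+1$ identities $y=y'$, $z_i=z'_i$ ($i=1,\dots,m-n$) hold. Using the factorization $x^k-(x')^k=(x-x')s_k(x,x')$ with $s_k=\sum_{a+b=k-1}x^a(x')^b$, and dividing by $x-x'$, these become
\[\sum_{k=1}^{r-1}t_ks_k+s_{r+1}=0,\qquad t_{ir}+\sum_{k=1}^{r-1}t_{ir+k}s_{k+1}=0\quad(i=1,\dots,m-n).\]

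Next I would solve the last $m-n$ equations for $t_r,t_{2r},\dots,t_{(m-n)r}$, each having unit coefficient at its distinguished variable, obtaining explicit polynomial expressions in $x,x'$ and the remaining $t_j$'s. This exhibits $\Delta_{F_r}$ as homeomorphic to the subset of $\tilde{\R^2}\x\R^{2n-m-1}$ cut out by the first displayed equation. Since that equation involves only $x,x',t_1,\dots,t_{r-1}$ and is precisely the defining equation of $\Delta_{f_r}\subset\tilde{\R^2}\x\R^{r-1}$, splitting the $\R^{2n-m-1}$ factor into the block $(t_1,\dots,t_{r-1})$ and its $(2n-m-r)$-dimensional complement (coordinatized by the $t_j$ with $j\in\bigcup_{i=1}^{m-n}\{ir+1,\dots,ir+r-1\}\cup\{(m-n+1)r,\dots,n-1\}$) yields the homeomorphism $\Delta_{F_r}\cong\Delta_{f_r}\x\R^{2n-m-r}$.

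Finally, I would verify that the zero slice $\Delta_{f_r}\x\{0\}$ corresponds to the subset $\{t_r=\dots=t_{n-1}=0\}$ of $\Delta_{F_r}$: when all coordinates of the complementary block vanish, the $m-n$ solved-for equations force $t_r=t_{2r}=\dots=t_{(m-n)r}=0$ as well. Since $F_r|_{\{t_r=\dots=t_{n-1}=0\}}$ coincides with $f_r$ up to the inclusion $\R^r\incl\R^m$, this locus is exactly the natural image of $\Delta_{f_r}$ in $\Delta_{F_r}$, so the constructed homeomorphism extends the given identification. The first sentence of the lemma then follows from path-connectedness of $\R^{2n-m-r}$. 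The main task is notational bookkeeping: tracking which indices are distinguished versus free, and checking that the solve step goes through uniformly (which is immediate from the unit leading coefficients).
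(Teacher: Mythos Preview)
Your proof is correct and follows essentially the same approach as the paper: separate the $y$-constraint (which cuts out $\Delta_{f_r}$) from the $z_i$-constraints, and show that for fixed $(x,x')$ each $z_i$-constraint contributes an affine $\R^{r-1}$ of free parameters, together with the unused $t_j$'s giving the remaining factor. The only difference is the choice of coordinates on that $\R^{r-1}$: you solve directly for the variable $t_{ir}$ with unit coefficient, whereas the paper rewrites $Q_i(x)=(x-x_1)(x-x_2)R(x)-x_1x_2R(0)$ and uses the coefficients of $R$ as parameters; both parametrizations are polynomial and yield the stated homeomorphism restricting to the identity on $\Delta_{f_r}\times 0$.
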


\begin{proof} A pair of points $(t_1,\dots,t_{r-1},x_1)$ and $(t_1',\dots,t_{r-1}',x_2)$ belongs to
$\Delta_{f_r}$ if and only if each $t_i'=t_i$ and $P(x_1)=P(x_2)$, where $P(x)=t_1x+\dots+t_{r-1}x^{r-1}+x^{r+1}$.
A pair of points $(t_1,\dots,t_{n-1},x_1)$ and $(t_1',\dots,t_{n-1}',x_2)$ belongs to $\Delta_{F_r}$ if and only 
if each $t_i'=t_i$, $P(x_1)=P(x_2)$ and each $Q_i(x_1)=Q_i(x_2)$, where $Q_i(x)=t_{ir}x+\dots+t_{ir+r-1}x^r$.

The condition $Q_i(x_1)=Q_i(x_2)$ is equivalent to saying that $x_1$ and $x_2$ are roots of $Q_i(x)-b$
for some $b\in\R$.
That is, $Q_i(x)=(x-x_1)(x-x_2)R(x)+b$ for some degree $r-2$ polynomial $R(x)$.
Since $Q_i(0)=0$, the latter condition is in turn equivalent to $Q_i(x)=(x-x_1)(x-x_2)R(x)-x_1x_2R(0)$.
Upon substituting $c_0+\dots+c_{r-2}x^{r-2}$ for $R(x)$ we obtain
$Q_i(x)=c_0x(x-x_1-x_2)+(c_1x+\dots+c_{r-2}x^{r-2})(x-x_1)(x-x_2)$.
Here $c_{r-2}$ is uniquely determined as the coefficient at $x^r$; using this, $c_{r-3}$ is uniquely determined
from the coefficient at $x^{r-1}$; and so on.
Thus all the $c_j$, which in more detail can be denoted $c_{ij}$, are independent real parameters.
Also $t_{(m-n+1)},\dots,t_{n-1}$ are additional independent real parameters (these are the coordinates that are 
not used in $P$ and $Q_1,\dots,Q_{m-n}$).
This yields the desired homeomorphism between $\Delta_{F_r}$ and $\Delta_{f_r}\x\R^{2n-m-r}$.
\end{proof}

Let $M_r$ be the set of all polynomials of the form $a_1x+\dots+a_{r-1}x^{r-1}+x^{r+1}$ (which occur
in the definition of the map $f_r$).
If we write $T_{r+1}=c_0+c_1x+\dots+c_{r+1}x^{r+1}$, then $c_{r+1}\ne 0$ and $c_r=c_{r-2}=c_{r-4}=\dots=0$.
Hence $\tau_r:=\frac1{c_{r+1}}(T_{r+1}-c_0)$ belongs to $M_r$.
Let $\lambda\:\R\to\R^r$ be the affine embedding defined by 
$\lambda(x)=(\frac{c_1}{c_{r+1}},\dots,\frac{c_{r-1}}{c_{r+1}},x)$.
Then the restriction of $f_r$ to $\lambda(\R)$ can be identified with $\tau_r$.

The lifts $\gamma_r^\pm\:\R\to\R^2$ of $\tau_r$, defined by $\gamma_r^\pm(x)=\big(\tau_r(x),\pm x\big)$, 
can be identified with the restrictions of $\Phi_r^\pm$.

\begin{lemma} \label{chebyshov} Every component of $\Delta_{f_r}$ contains a point of $\Delta_{\tau_r}$.
\end{lemma}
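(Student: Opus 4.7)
The plan is to establish two facts: (i) $\Delta_{f_r}$ has exactly two path-components, which are exchanged by the factor-exchanging involution; and (ii) $\Delta_{\tau_r}$ is non-empty. Once both are in hand, the lemma follows immediately, for the involution preserves $\Delta_{\tau_r}$ setwise while acting freely on $\Delta_{f_r}$, so any $(y_1,y_2)\in\Delta_{\tau_r}$ together with its image $(y_2,y_1)$ meets both components of $\Delta_{f_r}$.

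For (i), I will use the factorization
\[
\frac{P_t(x_1)-P_t(x_2)}{x_1-x_2}=\sigma_r(x_1,x_2)+t_{r-1}\sigma_{r-2}(x_1,x_2)+\dots+t_2\sigma_1(x_1,x_2)+t_1=:Q(t,x_1,x_2),
\]
where $\sigma_k(x_1,x_2):=x_1^k+x_1^{k-1}x_2+\dots+x_2^k$ and $P_t$ is the polynomial $t_1x+\dots+t_{r-1}x^{r-1}+x^{r+1}$. The crucial point is that $Q$ is \emph{affine-linear} in $t$, with coefficient of $t_1$ identically equal to $\sigma_0=1$. Consequently, the projection $\pi\:\Delta_{f_r}\to\tilde\R^2:=\{(x_1,x_2):x_1\ne x_2\}$ has as fiber over each $(x_1,x_2)$ the affine hyperplane $\{Q(\cdot,x_1,x_2)=0\}$ in the parameter space $\R^{r-1}$, and hence $\pi$ is a trivial affine $\R^{r-2}$-bundle (solving for $t_1$ in terms of $x_1$, $x_2$ and the remaining $t_i$ gives an explicit trivialization). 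Since $\tilde\R^2$ has precisely two path-components $\{x_1>x_2\}$ and $\{x_1<x_2\}$, exchanged by the involution, so does $\Delta_{f_r}$. The case $r=1$ is degenerate (no $t$-parameters) and handled by the direct computation $\Delta_{f_1}=\{(x,-x):x\ne 0\}$.

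For (ii), the non-emptiness of $\Delta_{\tau_r}$ is elementary: when $r\ge 2$, the polynomial $\tau_r$ has degree $r+1\ge 3$ and, being Chebyshev-like, is non-monotonic with at least two distinct critical values, so by the intermediate value theorem some regular value is attained at more than one point; and for $r=1$, $\tau_1(x)=x^2$ gives $\Delta_{\tau_1}=\{(x,-x):x\ne 0\}\ne\varnothing$.

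The only step requiring real thought is the explicit computation of $Q$ and the observation that its affine-linearity in $t$ turns $\Delta_{f_r}$ into an affine bundle over $\tilde\R^2$; everything else is routine. This is essentially the same structural input already used in the proof of Lemma~\ref{swallowtail}, which makes me expect no further obstacles.
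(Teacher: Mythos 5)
Your proof is correct, and it takes a genuinely different route from the paper's. You first observe that since
$\frac{P_t(x_1)-P_t(x_2)}{x_1-x_2}=t_1+t_2\sigma_1(x_1,x_2)+\dots+t_{r-1}\sigma_{r-2}(x_1,x_2)+\sigma_r(x_1,x_2)$
is affine in $t$ with constant (hence nonvanishing) $t_1$-coefficient, solving for $t_1$ exhibits $\Delta_{f_r}$ as $\tilde\R^2\times\R^{r-2}$, so it has exactly two components swapped by the involution; combined with the elementary non-emptiness of $\Delta_{\tau_r}$ and its equivariance, the lemma follows. The paper instead argues pointwise: starting from an arbitrary $(x_1,x_2)\in\Delta_P$ with $P\in M_r$, it deforms $P$ through $M_r$ while keeping a double value --- first to a polynomial having $x_1,x_2$ as roots, then (by collapsing conjugate pairs) to one with all roots real, and finally by linearly interpolating the root multiset to that of $\tau_r$ --- producing an explicit path in $\Delta_{f_r}$ from the given point to $\Delta_{\tau_r}$. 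Your argument is shorter and cleaner and yields as a byproduct the exact count $|\pi_0(\Delta_{f_r})|=2$ (the paper's proof only gives surjectivity of $\pi_0(\Delta_{\tau_r})\to\pi_0(\Delta_{f_r})$); the paper's argument is more explicit and constructive, which may matter if one wants actual paths rather than a counting argument. One tiny point: the phrase ``acting freely on $\Delta_{f_r}$'' in your reduction is true but not what is used --- what matters is that the involution exchanges the two components, which you do establish in (i).
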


\begin{proof} 
Let us note that $M_r$ consists of all monic degree $r+1$ polynomials $P$ such that $0$ is a root of $P$ and 
the arithmetic average of all roots of $P$ (including the complex ones) equals $0$.
We may topologize $M_r$ as a set of maps $\R\to\R$ with the $C^\infty$ topology.

In the case $r=1$ we have $\tau_1=f_1$ and there is nothing to prove.
In the case $r=2$ each $P\in M_2$ is of the form $P(x)=x^3-ax$, and if $\Delta_P$ is nonempty, then $a>0$.
We have $T_3=4x^3-3x$, and so $\tau_2=x^3-\frac34x$.
In this case the assertion is obvious, so we will assume that $r\ge 3$.

Let $P\in M_r$ and suppose that $(x_1,x_2)\in\Delta_P$, that is, $x_1\ne x_2$ and $P(x_1)=P(x_2)$.
Since $r\ge 3$, by the proof of Lemma \ref{swallowtail} $P(x)=cx(x-x_1-x_2)+P^+(x)$, where
$P^+(x)=(x-x_1)(x-x_2)Q(x)\in M_r$ and $c\in\R$.
Let $P_t(x)=tcx(x-x_1-x_2)+P^+(x)$.
Then $P_t$, $t\in[0,1]$, is a path in $M_r$ from $P_1=P$ to $P_0=P^+$ such that $(x_1,x_2)\in\Delta_{P_t}$ 
for each $t$ and $P^+$ has $x_1$ and $x_2$ among its roots. 

Next if $P^+$ has less than $r+1$ real roots, then $P^+(x)=(x-a-ib)(x-a+ib)R(x)$ for some $a,b\in\R$.
Let $Q_t(x)=(x-a-itb)(x-a+itb)R(x)$.
Then $Q_t$, $t\in [0,1]$, is a path in $M_r$ from $Q_1=P^+$ to $Q_0=(x-a)^2R(x)$, which has more real roots 
than $Q$, and we have $(x_1,x_2)\in\Delta_{P_t}$ for each $t$.
This procedure can be repeated until we get a path in $M_r$ from $P^+$ to a polynomial 
$P^{++}=(x-x_1)\cdots(x-x_{r+1})$ with all $x_i\in\R$, where $x_1$, $x_2$ are as above.
We may assume that $x_1<x_2$.
Let us note that $x_z=0$ for some $z$ (possibly $z=1$ or $2$).

The roots of $\tau_r$ are also all real.
(Specifically, $\tau_r$ has $r+1$ simple roots $\cos\frac{\pi k/2}{r+1}$, $k=1,\dots,r+1$, if $r$ is even, 
and $\frac{r+1}2$ double roots $\cos\frac{2\pi k}{r+1}$, $k=1,\dots,\frac{r+1}2$, if $r$ is odd.)
We may write $\tau_r=(x-a_1)\cdots(x-a_{r+1})$, where $a_z=0$ and (using that $r\ge 2$) $a_1<a_2$.
Let $R_t=\big(x-(1-t)x_1-ta_1\big)\cdots\big(x-(1-t)x_{r+1}-ta_{r+1}\big)$.
Clearly, $R_t$, $t\in[0,1]$, is a path in $M_r$ from $P^{++}$ to $\tau_r$ and 
$\big((1-t)x_1+ta_1,(1-t)x_2+ta_2\big)\in\Delta_{R_t}$ for each $t$.
\end{proof}

\begin{proposition} \label{morin-lift}
The space $C^0(F_r)$ of topological embeddings $\R^n\to\R^{m+1}$ that lift $F_r$ consists of two contractible
path components, one containing $\Phi_r^+$ and another $\Phi_r^-$.
\end{proposition}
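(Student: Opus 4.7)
The plan is to reduce Proposition~\ref{morin-lift} to the one-variable Lemma~\ref{chebyshov-lift} by running the information about embedded lifts through Lemmas~\ref{swallowtail} and~\ref{chebyshov}. Throughout I write an embedded lift as $F_r\x g$ for some continuous $g\:\R^n\to\R$, so that the embedding condition is $g(x)\ne g(y)$ for every $(x,y)\in\Delta_{F_r}$, and I consider the associated equivariant map $\tilde g\:\Delta_{F_r}\to S^0$ sending $(x,y)$ to the sign of $g(y)-g(x)$.

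First I would invoke Lemma~\ref{homotopy}(a), which is available because $F_r^{-1}(p)$ is finite for every $p\in\R^m$ (it is the zero set of a polynomial system in one variable, once the first $n-1$ coordinates are fixed). The lemma implies that if two embedded lifts $F_r\x g$, $F_r\x g'$ satisfy $\tilde g=\tilde g'$, then their linear interpolation $F_r\x\big((1-t)g+tg'\big)$ is an isotopy through embedded lifts. Consequently, each path component of $C^0(F_r)$ is convex under linear interpolation (hence contractible), and distinct path components correspond to distinct values of $\tilde g$ that are realized by embedded lifts.

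Next I would show that exactly two such values occur. By Lemma~\ref{swallowtail}, $\Delta_{F_r}$ is homeomorphic to $\Delta_{f_r}\x\R^{2n-m-r}$, so $\pi_0(\Delta_{F_r})=\pi_0(\Delta_{f_r})$; by Lemma~\ref{chebyshov}, every component of $\Delta_{f_r}$ meets $\Delta_{\tau_r}$. Hence $\tilde g$ is already determined by its restriction to $\Delta_{\tau_r}$. But the restriction of $F_r\x g$ to the affine line $\lambda(\R)\subset\R^n$ is, via the identification of $F_r|_{\lambda(\R)}$ with $\tau_r$, a topological embedded lift of $\tau_r$; since $\tau_r$ is obtained from $T_{r+1}$ by an affine reparametrization of the target, Lemma~\ref{chebyshov-lift} (applied with $r+1$ in place of $r$) shows that the space of such lifts has exactly two path components, distinguished by a sign pattern of $g$ on the critical values of $\tau_r$ that is precisely encoded by $\tilde g|_{\Delta_{\tau_r}}$. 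Therefore at most two values of $\tilde g$ can occur on $\Delta_{F_r}$.

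To finish, $\Phi_r^\pm$ restrict along $\lambda(\R)$ to $\gamma_r^\pm$, which lie in the two distinct components of the Chebyshev lift space, so their equivariant invariants disagree on some point of $\Delta_{\tau_r}$, and hence $\Phi_r^+$ and $\Phi_r^-$ lie in different components of $C^0(F_r)$. Combined with the upper bound of two and the contractibility of each component established above, this yields the proposition. The main obstacle I foresee is bookkeeping: one must verify carefully that the affine reparametrization between $T_{r+1}$ and $\tau_r$ transports the equivariant invariant correctly, and that the restriction to $\lambda(\R)$ really identifies the Chebyshev-level component invariant with the Morin-level one. Once this is in place, the rest is a direct assembly of the preceding lemmas.
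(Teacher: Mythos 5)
Your proposal is correct and follows essentially the same route as the paper's proof: reduce to the one-variable Chebyshev case via Lemmas \ref{swallowtail} and \ref{chebyshov}, pin down components by the $S^0$-valued double-point invariant $\tilde g$, and contract each component by a linear homotopy to $\Phi_r^\pm$. The only cosmetic differences are that you invoke Lemma \ref{homotopy}(a) explicitly where the paper inlines the same linear-homotopy argument, and you record the index shift (applying Lemma \ref{chebyshov-lift} with $r+1$ since $\tau_r$ is an affine transform of $T_{r+1}$), which the paper leaves implicit.
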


\begin{proof}
Given a point $(x_1,x_2)\in\Delta_{F_r}$, by Lemmas \ref{swallowtail} and \ref{chebyshov} it lies in 
the same component of $\Delta_{F_r}$ with some $(x_1',x_2')\in\Delta_{\tau_r}$.
Given a embedding $\Psi\:\R^n\to\R^{m+1}$ that lifts $F_r$, by Lemma \ref{chebyshov-lift} the restriction
$\delta\:\R\to\R^2$ of $\Psi$ over $\lambda(\R)\x 0$ is isotopic through lifts of $\tau_r$ to $\gamma_r^\eps$
for some sign $\eps$.
By symmetry we may assume that $\Psi(x_1)$ lies above $\Psi(x_2)$.
Then $\delta(x_1')$ lies above $\delta(x_2')$, and consequently $\gamma_r^\eps(x_1')$ lies above 
$\gamma_r^\eps(x_2')$.
But then also $\Phi_r^\eps(x_1)$ lies above $\Phi_r^\eps(x_2)$.
Hence the linear homotopy $h_t\:\R^n\to\R^{m+1}$ between $\Psi$ and $\Phi_r^\eps$, defined by 
$h_t(x)=(1-t)\Psi(x)+t\Phi_r^\eps(x)$, is an isotopy.
But this isotopy through lifts of $F_r$ continuously depends on $\Psi$.
\end{proof}

\section*{Acknowledgements}

I am grateful to P. Akhmetiev, R. Sadykov and A. Sz\H ucs for stimulating 
conversations and useful remarks and to the referee for a very thoughtful review including several pages of alternative arguments.

\section*{Disclaimer} 

I oppose all wars, including those wars that are initiated by governments
at the time when they directly or indirectly support my research. Wars of the latter type
include all wars waged by the Russian state since the Second Chechen war till the
present day (May 7, 2022), as well as the US-led invasions of Afghanistan and Iraq.

\end{document}